\documentclass[10pt,bezier]{article}
\usepackage{amsmath,amssymb,amsfonts,graphicx,caption,subcaption}
\usepackage[colorlinks,linkcolor=red,citecolor=blue]{hyperref}

\textwidth = 16.4 cm 
\textheight = 21.5 cm 
\oddsidemargin = 0 cm
\evensidemargin = 0 cm 
\topmargin = -0.7 cm
\parskip = 2.5 mm
\newtheorem{prethm}{{\bf Theorem}}[section]
\newenvironment{thm}{\begin{prethm}{\hspace{-0.5
em}{\bf.}}}{\end{prethm}}
\newtheorem{prepro}{{\bf Theorem}}

\newtheorem{precor}[prethm]{{\bf Corollary}}
\newenvironment{cor}{\begin{precor}{\hspace{-0.5
em}{\bf.}}}{\end{precor}}
\newtheorem{preconj}[prethm]{{\bf Conjecture}}

\newtheorem{preremark}[prethm]{{\bf Remark}}
\newenvironment{remark}{\begin{preremark}\em{\hspace{-0.5
em}{\bf.}}}{\end{preremark}}
\newtheorem{prelem}[prethm]{{\bf Lemma}}
\newenvironment{lem}{\begin{prelem}{\hspace{-0.5
em}{\bf.}}}{\end{prelem}}
\newtheorem{preque}[prethm]{{\bf Question}}

\newtheorem{preobserv}[prethm]{{\bf Observation}}

\newtheorem{preproposition}[prethm]{{\bf Proposition}}

\newtheorem{preproof}{{\bf Proof.}}
\newtheorem{preprooff}{{\bf Proof}}

\newenvironment{proof}[1]{\begin{preproof}{\rm
#1}\hfill{$\Box$}}{\end{preproof}}

\newtheorem{preproofs}{{\bf Second proof of }}

\newtheorem{preprooft}{{\bf Third proof of }}

\newtheorem{preproofF}{{\bf Proofs of}}

\title{\bf\Large 
The existence of $\{\p,\q\}$-orientations in edge-connected graphs
}
\author{{\normalsize{\sc Morteza Hasanvand${}$} }\vspace{3mm}
\\{\footnotesize{${}$\it Department of Mathematical
 Sciences, Sharif
University of Technology, Tehran, Iran}}
{\footnotesize{}}\\{\footnotesize{   $\mathsf{morteza.hasanvand@alum.sharif.edu  }$ }}}
\date{}
\def\p  {p}
\def\q  {q}

\def\t {t}

%
%
%
%
\begin{document}
\maketitle
\begin{abstract}{ 
In 1976 Frank and Gy{\'a}rf{\'a}s gave a necessary and sufficient condition for the existence of an orientation in an arbitrary graph $G$ such that for each vertex $v$, the out-degree $d^+_G(v)$ of it satisfies  $\p(v)\le d^+_G(v)\le \q(v)$, where  $\p$ and $\q$ are two integer-valued functions on $V(G)$ with $\p\le \q$. In this paper, we give a sufficient edge-connectivity condition  for the existence of an orientation in $G$ such that  for each vertex $v$,  $d^+_G(v)\in \{\p(v),\q(v)\}$, provided that  for each vertex $v$, $\p(v)\le \frac{1}{2}d_G(v) \le \q(v)$,  $|\q(v)-\p(v)|\le k$, and there is $\t(v)\in \{\p(v),\q(v)\}$ in which $|E(G)|=\sum_{v\in V(G)}\t(v)$. This result is a generalization of a theorem due to Thomassen (2012) on the existence of modulo orientations in  highly edge-connected graphs.
\\
\\
\noindent {\small {\it Keywords}:
\\
Modulo orientation;
edge-connecticity;
out-degree;
spanning tree.

}} {\small
}
\end{abstract}
%
%
%
%
%
%
%
%
%
%
%
%
%
%
\section{Introduction}
In this article, graphs have  no  loops, but  multiple  edges are allowed, and a general graph 
may have loops and multiple edges.
Let $G$ be a  graph.  The vertex set and  the edge set of $G$ are denoted by $V(G)$ and $E(G)$, respectively.
We denote by $d_G(v)$ the degree of a vertex $v$ in the graph $G$.
If $G$ has an orientation,  the out-degree and  in-degree of $v$ are denoted by $d_G^+(v)$ and $d_G^-(v)$.
For a vertex set $A$ of $G$ with at least two vertices, the number of edges of $G$ with exactly one end in $A$ is denoted by $d_G(A)$.
Also, we denote by $e_G(A)$ the number of edges with both ends in $A$.
For each vertex, let $L(v)$ be a set of integers.
We denote by  ${\it gap}(L(v))$ the maximum of all $|a-b|$
 taken over of consecutive integers in 
$a,b\in L(v)$, and the denote by $gap(L)$ the maximum of all $gap(L(v))$ taken over of all vertices $v$.
An orientation  of $G$ is said to be 
(i)
{\it $L$-orientation}, if for each vertex $v$, $d^+_G(v)\in L(v)$,
(ii) {\it $z$-defective $L$-orientation}, if for each vertex $v$ with $v\neq z$, $d^+_G(v)\in L(v)$,
 (iii) {\it $(\p,\q)$-orientation}, if for each vertex $v$, $\p(v)\le d^+_G(v)\le \q(v)$,
where $\p$ and $\q$ are two integer-valued functions on $V(G)$.
Let $k$ be a positive integer. 
The cyclic group of order $k$ is denoted by  $Z_k$.
An orientation of $G$ is said to be 
{\it $p$-orientation}, if for each vertex $v$, $d_G^+(v)\stackrel{k}{\equiv}p(v) $,
 where $p:V(G)\rightarrow Z_k$ is a mapping.
A graph $G$ is called 
{\it $m$-tree-connected}, if it contains  $m$ edge-disjoint spanning trees. 
Note that by the result of Nash-Williams~\cite{Nash-Williams-1961} and Tutte~\cite{Tutte-1961}  every $2m$-edge-connected graph is $m$-tree-connected.
A  graph $G$ is  said  to be {\it $(m,l_0)$-partition-connected},
 if it can be decomposed into an $m$-tree-connected factor and a factor $F$ having an orientation such that for each $v$, $d^+_G(v)\ge l_0(v)$, where $l_0$  is a nonnegative integer-valued function on $V(G)$. 
For a graph $G$ with a vertex $z$, we denote by $\chi_z$  the mapping $\chi_z:V(G)\rightarrow \{0,1\}$
such that $\chi(z)=1$ and $\chi(v)=0$ for all vertices $v$ with $v\neq z$. Also, we define $\bar{\chi}_z=1-\chi_z$.
 For two edges $xu$ and $uy$ incident with the vertex $u$, {\it  lifting of $xu$ and $uy$}
 is an operation that removes $xu$ and $uy$ and adds a new edge $xy$
(when the purpose is to generate a loopless graph we must not add the next edge $xy$ when $x=y$).
Throughout this article,  all variables $k$ and $m$ are positive  integers.

In 1965  Hakimi introduced the following criterion for the existence of an orientation with a given upper bound on out-degrees.
\begin{thm}{\rm (\cite{Hakimi-1965})}\label{thm:Intro:Hakimi}
{Let $G$ be a graph and let $\q$ be an integer-valued function on $V(G)$.
Then $G$ has an orientation such that for each $v\in V(G)$, $d^+_G(v)\le \q(v)$, if and only if $e_G(S)\le \sum_{v\in S}\q(v)$ for all $S\subseteq V(G)$.
}\end{thm}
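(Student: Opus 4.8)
The plan is to prove the two directions separately, with the forward (necessity) direction being a short counting argument and the reverse (sufficiency) direction reduced to a bipartite assignment problem solved by Hall's theorem. For necessity, suppose $G$ admits an orientation with $d^+_G(v)\le \q(v)$ for every $v$, and fix any $S\subseteq V(G)$. Every edge counted by $e_G(S)$ has both ends in $S$, so under the orientation it is directed from one vertex of $S$ to another and thus contributes exactly $1$ to the out-degree of its tail, which lies in $S$. Summing over $S$ gives $e_G(S)\le \sum_{v\in S}d^+_G(v)\le \sum_{v\in S}\q(v)$, which is the claimed inequality.

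For sufficiency, the key observation is that choosing an orientation of $G$ is the same as assigning to each edge one of its two endpoints (its tail), and the constraint $d^+_G(v)\le \q(v)$ says that $v$ may be chosen as tail by at most $\q(v)$ edges. I would model this by a bipartite graph $H$ with one side equal to $E(G)$ and the other side consisting, for each vertex $v$, of $\q(v)$ \emph{slots}, joining an edge $e=uv$ to every slot of $u$ and every slot of $v$. An orientation with $d^+_G(v)\le \q(v)$ for all $v$ corresponds exactly to a matching of $H$ that saturates $E(G)$: an edge matched to a slot of $u$ is oriented out of $u$, and each vertex receives at most $\q(v)$ such edges. Equivalently one may phrase this as a max-flow problem with a source feeding each edge with capacity $1$, each edge sending capacity $1$ to each endpoint, and each vertex $v$ joined to the sink with capacity $\q(v)$; a saturating integral flow is precisely the desired orientation.

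The decisive step is to verify Hall's condition for $H$, namely $|N_H(F)|\ge |F|$ for every $F\subseteq E(G)$. Given such an $F$, let $S$ be the set of vertices incident with at least one edge of $F$; then $N_H(F)$ is exactly the union of the slots of the vertices in $S$, so $|N_H(F)|=\sum_{v\in S}\q(v)$. On the other hand every edge of $F$ has both ends in $S$, whence $|F|\le e_G(S)$. Combining these with the hypothesis $e_G(S)\le \sum_{v\in S}\q(v)$ yields $|F|\le |N_H(F)|$. Thus Hall's condition holds, a saturating matching exists, and the induced orientation satisfies $d^+_G(v)\le \q(v)$ everywhere.

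I expect the main obstacle to be bookkeeping rather than conceptual: one must carry out the reduction from edge-sets $F$ to vertex-sets $S$ cleanly, and in particular use the inequality $|F|\le e_G(S)$ where $S$ is the set of vertices met by $F$. This step is exactly where the loopless hypothesis enters, since it guarantees that every edge of $F$ is counted by $e_G(S)$; multiple edges cause no difficulty, as each parallel edge is a distinct vertex of $H$. Alternatively, the entire sufficiency direction follows immediately from the integral max-flow min-cut theorem, the min-cut computation reducing to the same vertex-set inequality.
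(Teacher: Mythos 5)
Your proof is correct. Note first that the paper itself gives no proof of this statement: Theorem~\ref{thm:Intro:Hakimi} is quoted from Hakimi's 1965 paper as a known result, and the only related argument in the text is the one-line derivation of the edge-connectivity reformulation (Theorem~\ref{thm:Hakimi}) from it via the identity $e_G(S)=\sum_{v\in S}d_G(v)/2-d_G(S)/2$. So there is nothing to compare against except the literature, and your argument is the standard self-contained one. Both directions check out: necessity is the counting argument you give, and for sufficiency the reduction to a bipartite matching (or equivalently an integral max-flow) with $\q(v)$ slots per vertex is sound, with Hall's condition for an arbitrary $F\subseteq E(G)$ following exactly as you say from $|F|\le e_G(S)\le\sum_{v\in S}\q(v)$ where $S$ is the set of endpoints of $F$. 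Two small points worth tightening: the construction of ``$\q(v)$ slots'' implicitly needs $\q(v)\ge 0$, which does follow from the hypothesis applied to the singleton $S=\{v\}$ (since $e_G(\{v\})=0$ for a loopless graph), but you should say so; and your closing remark that looplessness is what guarantees $|F|\le e_G(S)$ is slightly off, since a loop at $v\in S$ would still be counted by $e_G(S)$ --- looplessness is a standing convention of the paper rather than a load-bearing step here. Neither point affects the validity of the proof.
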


In 1976  Frank and Gy{\'a}rf{\'a}s generalized Hakimi's result to the following bounded out-degree version.
\begin{thm}{\rm (\cite{Frank-Gyrfas-1976})}\label{thm:Into:Frank-Gyrfas}
{Let $G$ be a graph and let $\p$ and $\q$ be two integer-valued functions on $V(G)$ with $\p\le \q$.
Then $G$ has an orientation such that for each $v\in V(G)$, $\p (v)\le d^+_G(v)\le \q(v)$, if and only if for all $S\subseteq V(G)$,
$$e_G(S)\le \min\{\sum_{v\in S}q(v),\,  \sum_{v\in S}(d_G(v)-\p(v))\}.$$ 
}\end{thm}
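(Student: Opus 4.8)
The plan is to prove necessity by a one-line double-counting argument in each of the two directions, and to prove sufficiency by reducing to Hakimi's Theorem~\ref{thm:Intro:Hakimi} and then running a path-reversal (augmenting) argument whose feasibility is guaranteed by the second inequality. Throughout, I write $d_D^+$ and $d_D^-$ for out- and in-degrees in a given orientation $D$ of $G$.

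For necessity, suppose $D$ is an orientation with $\p(v)\le d_D^+(v)\le \q(v)$ for every $v$, and fix $S\subseteq V(G)$. Every edge with both ends in $S$ has its tail in $S$, so $e_G(S)\le \sum_{v\in S}d_D^+(v)\le \sum_{v\in S}\q(v)$. Dually, every such edge has its head in $S$, whence
$$e_G(S)\le \sum_{v\in S}d_D^-(v)=\sum_{v\in S}\big(d_G(v)-d_D^+(v)\big)\le \sum_{v\in S}\big(d_G(v)-\p(v)\big).$$
Combining the two bounds gives the stated condition, so necessity is immediate.

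For sufficiency, assume both inequalities hold for all $S$. The first, $e_G(S)\le \sum_{v\in S}\q(v)$, is exactly Hakimi's hypothesis, so Theorem~\ref{thm:Intro:Hakimi} supplies an orientation $D$ with $d_D^+(v)\le \q(v)$ everywhere. Among all orientations obeying this upper bound I would pick one minimizing the total deficiency $\Phi(D)=\sum_{v\in V(G)}\max\{0,\p(v)-d_D^+(v)\}$, and I claim $\Phi(D)=0$, which is precisely a $(\p,\q)$-orientation. Suppose not, so some $v_0$ has $d_D^+(v_0)<\p(v_0)$. Let $S$ be the set of vertices from which $v_0$ is reachable by a directed path in $D$ (so $v_0\in S$). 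No edge of $D$ enters $S$ from outside, since such an edge would place its tail in $S$ too; hence every edge with head in $S$ lies inside $S$, giving $\sum_{v\in S}d_D^-(v)=e_G(S)$ and therefore
$$\sum_{v\in S}\big(d_D^+(v)-\p(v)\big)=\sum_{v\in S}d_G(v)-e_G(S)-\sum_{v\in S}\p(v)\ge 0,$$
where the inequality is the second hypothesis applied to $S$. Since $d_D^+(v_0)-\p(v_0)\le -1$, some other $w\in S$ must satisfy $d_D^+(w)>\p(w)$. Reversing a simple directed $w$-to-$v_0$ path raises $d_D^+(v_0)$ by one, lowers $d_D^+(w)$ by one, and leaves every other out-degree fixed; the bound $d^+\le \q$ survives (at $v_0$ because $d_D^+(v_0)<\p(v_0)\le \q(v_0)$, at $w$ because its out-degree only drops), while $\Phi$ strictly decreases, contradicting minimality.

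The step demanding the most care is the observation that $S$ admits no incoming edge, yielding the identity $\sum_{v\in S}d_D^-(v)=e_G(S)$. This is the hinge that converts the global hypothesis on $S$ into the existence of a \emph{local} surplus vertex $w$ able to feed $v_0$; once this is in place, the monotone decrease of $\Phi$ under path reversal is routine bookkeeping, and termination follows because $\Phi$ is a nonnegative integer.
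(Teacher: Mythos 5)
Your proof is correct. Note, however, that the paper itself offers no proof of Theorem~\ref{thm:Into:Frank-Gyrfas}: it is stated as a known result with a citation to Frank and Gy\'arf\'as, and the only related derivation in the paper is the reverse direction, namely an edge-connectivity reformulation obtained \emph{from} this theorem via the identity $e_G(S)=\sum_{v\in S}d_G(v)/2-d_G(S)/2$. So there is nothing in the paper to compare against step by step; what you have supplied is a self-contained proof of the cited result. Your argument is the standard one: necessity by double counting (tails give the bound by $\sum_{v\in S}\q(v)$, heads give the bound by $\sum_{v\in S}(d_G(v)-\p(v))$), and sufficiency by starting from a Hakimi orientation respecting the upper bound and repairing deficiencies with augmenting path reversals. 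The key step --- taking $S$ to be the set of vertices that can reach a deficient vertex $v_0$, observing that no arc enters $S$, hence $\sum_{v\in S}d_D^-(v)=e_G(S)$, and then invoking the second inequality to produce a surplus vertex $w\in S$ --- is sound, and the bookkeeping (the upper bound $d^+\le\q$ survives the reversal, the potential $\Phi$ strictly decreases, so a minimizer has $\Phi=0$) is all verified correctly. This gives the reader something the paper does not: an elementary, fully checkable derivation of the Frank--Gy\'arf\'as criterion from Hakimi's theorem, rather than a pointer to the literature.
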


In 2012 Thomassen gave a  sufficient edge-connectivity condition for the existence of  modulo orientations as the following theorem.
\begin{thm}{\rm(\cite{Thomassen-2012})}\label{Intro:orientation:thm:Thomassen}
{Let $G$ be a $(2k^2+k)$-edge-connected graph and let $p:V(G)\rightarrow Z_k$ be a mapping.
Then $G$ has a $p$-orienatation if and only if $|E(G)| \stackrel{k}{\equiv} \sum_{v\in V(G)}p(v)$.
}\end{thm}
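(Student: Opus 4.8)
The plan is to settle necessity by a counting identity and then to prove sufficiency by an induction whose engine is the lifting of edges at one distinguished vertex. Necessity is immediate: in any orientation $\sum_{v\in V(G)}d^+_G(v)=|E(G)|$, so if $d^+_G(v)\equiv p(v)\pmod{k}$ for every $v$, then summing gives $|E(G)|\equiv\sum_{v\in V(G)}p(v)\pmod{k}$.

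For sufficiency I would prove the stronger \emph{defective} statement: if $G$ has a vertex $z$ such that every edge cut not isolating $z$ has at least $2k^2+k$ edges, then for every $p\colon V(G)\to Z_k$ the graph $G$ has a $z$-defective $p$-orientation. Since a $(2k^2+k)$-edge-connected graph satisfies this cut condition for every choice of $z$, this implies the theorem, once we observe that the congruence hypothesis promotes any $z$-defective $p$-orientation to a genuine one at no cost: from $\sum_v d^+_G(v)=|E(G)|$ and $d^+_G(v)\equiv p(v)$ for all $v\neq z$ one computes $d^+_G(z)\equiv |E(G)|-\sum_{v\neq z}p(v)\equiv p(z)\pmod{k}$, so the lone defect at $z$ cannot actually occur. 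The advantage of the defective formulation is that the congruence is no longer needed \emph{inside} the induction, which frees the reduction steps.

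The induction runs on $(|V(G)|,|E(G)|)$ lexicographically, keeping $z$ fixed. The basic move reduces $|E(G)|$ by lifting two edges $zx$ and $zy$ at $z$ into a single edge $xy$. The key point is that lifting is invisible to the residues $d^+_G(v)\bmod k$ away from $z$: given a $z$-defective $p$-orientation of the lifted graph $G'$, orient $xy$ there arbitrarily and then orient $zx,zy$ in $G$ so as to reproduce at $x$ and at $y$ exactly the contribution that $xy$ made; this leaves $d^+_G(v)$ unchanged for every $v\neq z$ and moves only $d^+_G(z)$, which we do not control. Hence a solution for $G'$ pulls back to one for $G$. A short cut-counting check shows that lifting $zx,zy$ lowers $d_G(A)$ only for sets $A$ with $x,y\in A$ and $z\notin A$, and then by exactly $2$; so the hypothesis is preserved precisely when the pair $\{x,y\}$ avoids every \emph{tight} cut $A\not\ni z$ with $d_G(A)$ close to $2k^2+k$. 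When such an admissible pair exists, Mader's splitting-off theorem supplies it, and the induction on $|E(G)|$ proceeds.

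The remaining case, where every pair at $z$ is blocked by some tight cut, is the main obstacle and is where the bound $2k^2+k$ is consumed. Here one extracts from the family of tight cuts a set $A\not\ni z$ that is itself highly edge-connected, contracts $A$ to a single vertex $a$, and recurses: the theorem is applied to the smaller graph $G[A]$ with the modified target $p(v)-b(v)$, where $b(v)$ counts the boundary edges already oriented out of $v$, while the defective statement is applied to $G/A$ with defective vertex $z$ and $a$ treated as an ordinary vertex. Setting $p(a):=\sum_{v\in A}p(v)-e_G(A)\pmod{k}$ makes the congruence needed to solve the interior of $A$ hold automatically, so the two partial orientations glue along the boundary of $A$; both recursive instances have strictly fewer vertices, and the single-vertex graph is the base case. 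The delicate part throughout is the structural dichotomy ``either an admissible lift at $z$ exists, or a highly connected contractible set does,'' together with verifying that $G[A]$ inherits enough edge-connectivity to invoke the theorem; it is exactly this bookkeeping that forces the threshold to be quadratic in $k$. I note in passing that a direct appeal to Theorem~\ref{thm:Into:Frank-Gyrfas} with a width-$k$ out-degree window cannot substitute for this machinery, since pinning $d^+_G(v)$ to a single residue requires exact out-degrees, and the resulting cut conditions of the form $d_G(S)\ge k|S|$ fail for large $S$ no matter how large the edge-connectivity is.
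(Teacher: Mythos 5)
The paper does not prove this statement at all: Theorem~\ref{Intro:orientation:thm:Thomassen} is quoted from \cite{Thomassen-2012} and used as motivation, and the paper's own machinery (Lemma~\ref{lem:modulo:3k-3}, Theorem~\ref{thm:z-defective-p,q}) imports the needed orientation results as black boxes. So your proposal must stand on its own, and it does not: the strengthened induction hypothesis on which the whole argument rests is false.

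You propose to prove that if every edge cut of $G$ not isolating $z$ has at least $2k^2+k$ edges, then $G$ has a $z$-defective $p$-orientation for \emph{every} $p\colon V(G)\rightarrow Z_k$, with no congruence hypothesis. Take $k\ge 4$, let $H$ be any $(2k^2+k)$-edge-connected graph, and let $G$ be obtained from $H$ by adding a new vertex $z$ joined to one vertex of $H$ by exactly two parallel edges. Every cut of $G$ other than the one separating $z$ from $V(H)$ has at least $2k^2+k$ edges, so your hypothesis holds; but in any orientation $\sum_{v\neq z}d^+_G(v)=|E(G)|-d^+_G(z)\in\{|E(G)|,\,|E(G)|-1,\,|E(G)|-2\}$, so choosing $p$ with $\sum_{v\neq z}p(v)\stackrel{k}{\equiv}|E(G)|-3$ rules out any $z$-defective $p$-orientation. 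The defect vertex can absorb a discrepancy of at most $d_G(z)$, and your cut condition deliberately stops controlling $d_G(z)$. This is not a repairable detail: it is precisely why the actual proofs (Thomassen, and Lov\'asz--Thomassen--Wu--Zhang) keep the congruence $|E(G)|\stackrel{k}{\equiv}\sum_{v}p(v)$ inside the induction and instead strengthen the statement by prescribing the \emph{exact} out-degree of the special vertex together with parity and degree bounds at it. The same defect resurfaces in your termination: once all edges at $z$ are split off you need a genuine $p$-orientation of $G-z$, for which no congruence is available, and introducing a second defect vertex there breaks the $z$-defective conclusion. The ``admissible lift or contractible tight set'' dichotomy in your third paragraph is also only asserted, but the false induction hypothesis is the fatal issue.
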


In this paper, we provide a development for Thomassen's result by giving a  sufficient edge-connectivity for the existence of $\{\p,\q\}$-orientations as the following theorem.
\begin{thm}\label{Into:thm:p,q-orientation}
{Let $G$ be a $8k^2$-edge-connected graph  and let $\p$ and $\q$ be two  integer-valued functions on $V(G)$  in which for each vertex $v$, $\p(v)\le d_G(v)/2\le \q(v)$ and $|\q(v)-\p(v)|\le k$, 
Then $G$ has an orientation such  that for each vertex $v$, $d^+_G(v)\in \{\p(v),\q(v)\}$
if and only if  there is an integer-valued function $t$ on $V(G)$ in which $\t(v)\in \{\p(v),\q(v)\}$  for each vertex $v$,  and 
$|E(G)|=\sum_{v\in V(G)}\t(v)$.
}\end{thm}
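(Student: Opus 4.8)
The necessity is immediate: if $G$ has an orientation with $d^+_G(v)\in\{p(v),q(v)\}$ for every $v$, then $t(v):=d^+_G(v)$ takes values in $\{p(v),q(v)\}$ and satisfies $\sum_{v\in V(G)}t(v)=\sum_{v\in V(G)}d^+_G(v)=|E(G)|$, since each edge contributes exactly one to the total out-degree. For sufficiency my plan is to recast the problem as one about a prescribed out-degree sequence. Recall from Theorem~\ref{thm:Intro:Hakimi} that $G$ has an orientation with $d^+_G\le s$ exactly when $e_G(S)\le\sum_{v\in S}s(v)$ for all $S\subseteq V(G)$; if in addition $\sum_v s(v)=|E(G)|$, then $\sum_v d^+_G(v)=\sum_v s(v)$ together with $d^+_G\le s$ forces $d^+_G=s$ everywhere. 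Hence it suffices to produce a single function $s$ with $s(v)\in\{p(v),q(v)\}$ for each $v$, with $\sum_v s(v)=|E(G)|$, and satisfying every cut inequality $e_G(S)\le\sum_{v\in S}s(v)$; the hypothesis on $t$ supplies a candidate meeting the two equalities, so the whole game is to arrange the cut inequalities as well.

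Writing $e_G(S)=\tfrac12\big(\sum_{v\in S}d_G(v)-d_G(S)\big)$, the cut inequality for $S$ becomes
\[\sum_{v\in S}\big(s(v)-\tfrac12 d_G(v)\big)\ge-\tfrac12 d_G(S).\]
Because $p(v)\le\tfrac12 d_G(v)\le q(v)$, the choice $s(v)=p(v)$ contributes $-\alpha(v)$ and $s(v)=q(v)$ contributes $+\beta(v)$, where $\alpha(v)=\tfrac12 d_G(v)-p(v)\ge0$ and $\beta(v)=q(v)-\tfrac12 d_G(v)\ge0$ satisfy $\alpha(v)+\beta(v)=q(v)-p(v)\le k$, so $0\le\alpha(v),\beta(v)\le k$. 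The two extreme regimes are then automatic: if $|S|\le 4k$ then $\sum_{v\in S}\alpha(v)\le k|S|\le 4k^2\le\tfrac12 d_G(S)$ using $d_G(S)\ge 8k^2$, and if $|V(G)\setminus S|\le 4k$ the same estimate applied to the complement (via $\sum_{v\in V(G)}(s(v)-\tfrac12 d_G(v))=0$) settles it. The genuine difficulty is therefore the \emph{sparse cuts with both sides large}, where one must choose the labels $s(v)\in\{p(v),q(v)\}$ so that inside every such $S$ the accumulated penalty $\sum_{v\in S}\alpha(v)$ is offset by enough $q$-credit, all while the gaps $\alpha(v)+\beta(v)$ vary from vertex to vertex and the single global equation $\sum_v s(v)=|E(G)|$ must hold.

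To manage this I would follow Thomassen's inductive scheme, replacing the modular target by the list $L(v)=\{p(v),q(v)\}$ with $gap(L)\le k$. Since an $8k^2$-edge-connected graph is $4k^2$-tree-connected by Nash--Williams--Tutte, I would induct on $|E(G)|$, at each step applying a splitting-off (lifting) operation of the kind guaranteed by Mader/Lov\'asz in highly edge-connected graphs. The key bookkeeping is that lifting $j$ pairs incident with a vertex $u$ lowers $d_G(u)$ by $2j$ and, upon orienting the new edges and un-lifting, restores exactly $j$ to $d^+_G(u)$ (each restored pair gives $u$ one in-edge and one out-edge); replacing $L(u)$ by $\{p(u)-j,\,q(u)-j\}$ thus preserves both $p\le\tfrac12 d_G\le q$ and the bound $gap\le k$, since the half-degree drops by exactly $j$. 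I would absorb the slack of the global sum at one distinguished vertex $z$: first establish a $z$-defective $L$-orientation (relaxing the requirement at $z$ via $\bar\chi_z$), and then invoke the hypothesis on $t$ to force $d^+_G(z)\in\{p(z),q(z)\}$ too.

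The hard part, and the place that should dictate the quadratic bound, is guaranteeing that enough edge-connectivity survives every lifting so that the splitting-off theorem keeps applying and so that each remaining sparse cut still retains enough crossing edges to pay its $\alpha$-penalty in the estimate above. Concretely, I expect the $8k^2=2\cdot 4k^2$ margin to be consumed because each vertex may need its out-degree nudged by up to $k$ away from the balanced value, and sustaining $\sim k$ such adjustments across the medium cuts requires a $\sim k$-fold connectivity reserve throughout the induction, whence the factor $k^2$. Verifying that this reserve is never exhausted, and that the label-balancing inside every medium cut can be maintained simultaneously with the exact global equation $\sum_v s(v)=|E(G)|$, is the technical heart I would need to push through.
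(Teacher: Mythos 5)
Your necessity argument and your two boundary estimates (cuts with $|S|\le 4k$ or $|V(G)\setminus S|\le 4k$) are correct, and the reformulation via Theorem~\ref{thm:Intro:Hakimi} --- find $s$ with $s(v)\in\{p(v),q(v)\}$, $\sum_v s(v)=|E(G)|$, and $e_G(S)\le\sum_{v\in S}s(v)$ for all $S$ --- is a faithful restatement of the problem. But it is only a restatement: by the necessity direction of Hakimi's theorem such an $s$ exists if and only if the desired orientation exists, so nothing has been reduced, and for the cuts you call ``sparse with both sides large'' you prove nothing. From that point on the proposal is a plan rather than a proof, and you say so yourself; the technical heart is exactly what is missing.

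Two concrete ingredients of the paper's argument are absent and are not replaceable by the lifting induction you sketch. First, the defective step: the paper does not lift vertex by vertex but decomposes $G$ into spanning trees $T_i$ and $(3i-3)$-tree-connected factors $H_i$, one for each possible gap value $i=|q(v)-p(v)|$, attaches to each $H_i$ a parity-correcting spanning forest $F_i\subseteq T_i$ (Lemma~\ref{lem:parityforest}) so that $d_{G_i}(v)$ has the right parity, and applies Lemma~\ref{lem:modulo:3k-3} to each $G_i$ with modulus $i$; this is how the varying gaps are handled simultaneously, an issue your sketch does not address. Second, and more importantly, the correction at $z$: after the $z$-defective orientation, the hypothesis on $t$ does \emph{not} ``force'' $d^+_G(z)\in\{p(z),q(z)\}$, because every other vertex may independently land on $p$ or on $q$ and the resulting discrepancy at $z$ is an arbitrary signed sum of gaps. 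The paper resolves this by reserving a $2k^2$-edge-connected Eulerian factor $H$, defining the error $s(v)=t(v)-d^+_{G_0}(v)-d_H(v)/2$, choosing a \emph{minimal} set $S\ni z$ with $\sum_{v\in S}s(v)=0$, and invoking Lemma~\ref{lem:additive} to conclude $\sum_{v\in S}|s(v)|\le 2k(k-1)$, which is what lets Corollary~\ref{cor:changing-orientations} reorient $H$ to absorb all the errors on $S$. Without this additive lemma (or some substitute bounding the total correction), neither the global sum constraint nor the requirement at $z$ can be met, and no amount of connectivity reserve in the lifting induction supplies it.
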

%
%
%
%
%
%
%
%
%
%
\section{Edge-connected graphs: $(\p,\q)$-orientations and $\{\p,\q\}$-orientations}
\subsection{$(\p,\q)$-orientations}
In this section, we are going to derive some corollary of the following reformulation of Hakimi's Theorem.
This version exhibits  that why edge-connectivity plays an important role for finding orientations whose out-degrees are far from   the half of the corresponding degrees in $G$.
\begin{thm}{\rm (Hakimi~\cite{Hakimi-1965})}\label{thm:Hakimi}
{Let $G$ be a graph and let $\q$ be an integer-valued function on $V(G)$.
Then $G$ has an orientation such that for all $v\in V(G)$, $d^+_G(v)\le \q(v)$, if and only if for all $S\subseteq V(G)$,
$$ \sum_{v\in S}(d_G(v)-2\q(v))\le d_G(S),$$ 
Furthermore, under this condition, $d^+_G(v)= \q(v)$ for all $v\in V(G)$, if and only if  $|E(G)|=\sum_{v\in V(G)}\q(v)$.
}\end{thm}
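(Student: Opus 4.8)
The plan is to show that the stated condition is merely an algebraic rewriting of the classical Hakimi criterion in Theorem~\ref{thm:Intro:Hakimi}, and then to settle the ``furthermore'' assertion by a short counting argument. For the equivalence of the two criteria, I would invoke the elementary identity that records, for any vertex set $S$, how the edges contribute to the degrees of vertices in $S$: every edge with both ends in $S$ is counted twice and every edge with exactly one end in $S$ is counted once, so that
$$\sum_{v\in S} d_G(v) = 2\,e_G(S) + d_G(S).$$
Starting from Hakimi's inequality $e_G(S)\le \sum_{v\in S}\q(v)$, I would substitute $e_G(S) = \tfrac12\big(\sum_{v\in S} d_G(v) - d_G(S)\big)$, multiply through by $2$, and rearrange to obtain $\sum_{v\in S}(d_G(v)-2\q(v)) \le d_G(S)$. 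Since each manipulation is reversible and the degree-sum identity is exact, the two conditions are equivalent for every $S\subseteq V(G)$, and hence the reformulated criterion is necessary and sufficient exactly when Hakimi's is.

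For the furthermore part, I would use the fact that in any orientation of $G$ each edge contributes exactly one to the out-degree of its tail, so that $\sum_{v\in V(G)} d^+_G(v) = |E(G)|$. If the orientation satisfies $d^+_G(v) = \q(v)$ for all $v$, then summing gives $\sum_{v\in V(G)} \q(v) = |E(G)|$ at once. Conversely, assume $|E(G)| = \sum_{v\in V(G)}\q(v)$ and that $G$ has an orientation with $d^+_G(v)\le \q(v)$ for all $v$, as guaranteed by the criterion. Then
$$|E(G)| = \sum_{v\in V(G)} d^+_G(v) \le \sum_{v\in V(G)} \q(v) = |E(G)|,$$
so the middle inequality is forced to be an equality term by term; since each $d^+_G(v)\le \q(v)$ and the totals coincide, we conclude $d^+_G(v) = \q(v)$ for every vertex $v$.

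I do not expect a serious obstacle here: the whole statement is a clean repackaging of Theorem~\ref{thm:Intro:Hakimi} together with the handshaking count for out-degrees. The only point requiring a little care is to confirm that the rewriting of the inequality is a genuine equivalence rather than a one-way implication, which holds because the degree-sum identity is exact and each step reverses. The value of the reformulation, as the surrounding discussion indicates, is that expressing the obstruction through the boundary $d_G(S)$ instead of $e_G(S)$ makes the influence of edge-connectivity transparent, since a large cut $d_G(S)$ directly relaxes the constraint.
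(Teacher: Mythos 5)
Your proposal is correct and follows essentially the same route as the paper: the paper's own proof is the one-line observation that $\sum_{v\in S}d_G(v)/2-d_G(S)/2=e_G(S)$ combined with Theorem~\ref{thm:Intro:Hakimi}, which is exactly your substitution argument, and your handshaking count $\sum_{v}d^+_G(v)=|E(G)|$ settles the ``furthermore'' clause just as the paper implicitly does. No issues.
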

\begin{proof}
{Apply Theorem~\ref{thm:Intro:Hakimi} and the fact that $\sum_{v\in S}d_G(v)/2-d_G(S)/2=e_G(S)$ for every vertex set $S$.
}\end{proof}
Another immediate consequence of Theorem~\ref{thm:Hakimi} is given in the next corollary.
\begin{cor}\label{cor:changing-orientations}
{Let $G$ be a graph and let $\q$ be an integer-valued function on $V(G)$ satisfying $|E(G)|\le \sum_{v\in V(G)}q(v)$.
If $G$ is $\lambda$-edge-connected, then it admits an orientation such that for each vertex $v$, $d^+_G(v)\le \q(v)$,
 where $$\lambda=\sum_{v\in V(G)}\max\{0,d_G(v)-2\q(v)\}.$$
Furthermore, under this condition, $d^+_G(v)= \q(v)$ for all $v\in V(G)$, if and only if  $|E(G)|=\sum_{v\in V(G)}\q(v)$.
}\end{cor}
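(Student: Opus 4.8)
The plan is to deduce the corollary directly from Theorem~\ref{thm:Hakimi}. By that theorem, $G$ admits an orientation with $d^+_G(v)\le \q(v)$ for all $v$ precisely when the cut condition
$$\sum_{v\in S}(d_G(v)-2\q(v))\le d_G(S)\qquad\text{for all }S\subseteq V(G)$$
holds, and its ``Furthermore'' clause already delivers the last sentence verbatim. So the entire task reduces to verifying this single family of inequalities, which I would organize by a case split on the set $S$.

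First I would record the crude but decisive bound, valid for every $S\subseteq V(G)$,
$$\sum_{v\in S}(d_G(v)-2\q(v))\le \sum_{v\in S}\max\{0,\,d_G(v)-2\q(v)\}\le \sum_{v\in V(G)}\max\{0,\,d_G(v)-2\q(v)\}=\lambda .$$
The point of inserting $\max\{0,\cdot\}$ is to discard those vertices whose budget $\q(v)$ already exceeds half the degree (these terms only help the inequality), so that the surviving ``excess'' can never beat the global quantity $\lambda$.

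For a nonempty proper subset $\emptyset\neq S\subsetneq V(G)$, the $\lambda$-edge-connectivity of $G$ gives $d_G(S)\ge\lambda$, so the display above yields $\sum_{v\in S}(d_G(v)-2\q(v))\le\lambda\le d_G(S)$, exactly as required; this also covers singletons $S=\{v\}$, where $d_G(\{v\})=d_G(v)$. The two boundary cases are treated directly: for $S=\emptyset$ both sides vanish, and for $S=V(G)$ the cut is empty while $\sum_{v\in V(G)}(d_G(v)-2\q(v))=2|E(G)|-2\sum_{v}\q(v)\le 0$ by the standing hypothesis $|E(G)|\le\sum_{v}\q(v)$.

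I do not expect a genuine obstacle here once Theorem~\ref{thm:Hakimi} is available; the only thing to get right is the division of labor between the two hypotheses. Edge-connectivity supplies $d_G(S)\ge\lambda$ on exactly the proper nonempty cuts, while the global inequality $|E(G)|\le\sum_v\q(v)$ is precisely what rescues the full set $S=V(G)$, where edge-connectivity says nothing. The one subtlety worth flagging is that choosing $\lambda$ to be the sum of the \emph{truncated} excesses $\max\{0,d_G(v)-2\q(v)\}$, rather than of the raw quantities $d_G(v)-2\q(v)$, is exactly strong enough to dominate every proper cut, and no stronger.
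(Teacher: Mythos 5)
Your proof is correct and follows essentially the same route as the paper's: bound $\sum_{v\in S}(d_G(v)-2\q(v))$ by $\lambda$ via the truncation $\max\{0,\cdot\}$, use $\lambda$-edge-connectivity for proper nonempty cuts and the hypothesis $|E(G)|\le\sum_v\q(v)$ for $S=V(G)$, then apply Theorem~\ref{thm:Hakimi}. No issues.
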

\begin{proof}
{For every nonempty proper subset $S$ of $V(G)$,
$\sum_{v\in S}(d_G(v)-2\q(v))\le \sum_{v\in V(G)}\max\{0,d_G(v)-2\q(v)\}=\lambda\le d_G(S)$.
If $S=V(G)$, then by the assumption, $ \sum_{v\in S}(d_G(v)-2\q(v))\le 0=d_G(S)$.
Now, it enough to apply Theorem~\ref{thm:Hakimi}.
}\end{proof}
The following corollary makes an interesting tool  for constructing orientations from a given orientation.
\begin{cor}\label{cor:epsilon}
{Let $G$ be a graph with an orientation $D$ and 
  let $\varepsilon$ be a rational number with $0\le  \varepsilon \le 1$. 
Then $G$ admits an orientation $D_0$ such that  for all $v\in V(G)$,
$d^+_{D_0}(v)=\frac{1-\varepsilon}{2}d_G(v)+\varepsilon d^+_D(v)$
if and only if  for all $v\in V(G)$, $\frac{1-\varepsilon}{2}d_G(v)+\varepsilon d^+_D(v)$ is integer.
}\end{cor}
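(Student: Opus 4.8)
The plan is to apply the reformulation of Hakimi's theorem (Theorem~\ref{thm:Hakimi}) to the target out-degree function $q(v)=\frac{1-\varepsilon}{2}d_G(v)+\varepsilon d^+_D(v)$. The forward (only if) direction is immediate: any orientation assigns integer out-degrees, so if an orientation $D_0$ realizes $d^+_{D_0}(v)=q(v)$ for every $v$, then each $q(v)$ must be an integer. The content is all in the converse.

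For the converse, I would first check that $q$ sums to the right total. Since $\sum_{v}d_G(v)=2|E(G)|$ and $\sum_{v}d^+_D(v)=|E(G)|$, we obtain $\sum_{v}q(v)=\frac{1-\varepsilon}{2}\cdot 2|E(G)|+\varepsilon|E(G)|=|E(G)|$. This is exactly the equality required to activate the ``furthermore'' clause of Theorem~\ref{thm:Hakimi}, so it only remains to verify the main Hakimi inequality $\sum_{v\in S}(d_G(v)-2q(v))\le d_G(S)$ for every $S\subseteq V(G)$. Note that the integrality hypothesis in the statement is precisely what is needed to apply Hakimi's criterion, so no additional work is spent on that front.

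The key computation is that $d_G(v)-2q(v)=\varepsilon\bigl(d_G(v)-2d^+_D(v)\bigr)=\varepsilon\bigl(d^-_D(v)-d^+_D(v)\bigr)$, using $d_G(v)=d^+_D(v)+d^-_D(v)$. Summing over $S$, the edges of $D$ internal to $S$ cancel, since each such edge adds $1$ to both the total out-degree and the total in-degree of $S$; only the cut edges survive. Writing $a$ and $b$ for the numbers of edges of $D$ directed out of and into $S$ respectively, I would conclude $\sum_{v\in S}(d^-_D(v)-d^+_D(v))=b-a$. Because $a+b=d_G(S)$ and $\varepsilon\le 1$, this gives $\sum_{v\in S}(d_G(v)-2q(v))=\varepsilon(b-a)\le \varepsilon(a+b)=\varepsilon\, d_G(S)\le d_G(S)$, which is the desired inequality.

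With the Hakimi inequality verified for all $S$ and the sum condition $\sum_{v}q(v)=|E(G)|$ established, Theorem~\ref{thm:Hakimi} immediately yields an orientation $D_0$ with $d^+_{D_0}(v)=q(v)$ for every $v$, completing the proof. The argument is in effect a one-line reduction to Hakimi's theorem, and I do not expect a genuine obstacle. The only point needing care is the cut decomposition showing that the internal edges cancel, after which the worst case of the inequality is comfortably controlled by the bound $\varepsilon\le 1$; it is this factor of $\varepsilon$ that makes the inequality slack rather than tight.
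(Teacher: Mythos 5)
Your proposal is correct and takes essentially the same route as the paper: both reduce to Theorem~\ref{thm:Hakimi} with the target function $q(v)=\frac{1-\varepsilon}{2}d_G(v)+\varepsilon d^+_D(v)$, note that $d_G(v)-2q(v)=\varepsilon\bigl(d_G(v)-2d^+_D(v)\bigr)$, and use $\varepsilon\le 1$ to get the cut inequality, then invoke the ``furthermore'' clause via $\sum_v q(v)=|E(G)|$. The only cosmetic difference is that you verify the Hakimi inequality for $D$ by counting cut edges directly, whereas the paper obtains it from the ``only if'' direction of Theorem~\ref{thm:Hakimi} applied to $D$ itself; your version is slightly more explicit about the total-sum condition, which the paper leaves implicit.
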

\begin{proof}
{By Theorem~\ref{thm:Hakimi}, for every vertex $S$, we have
 $ d_G(S) \ge \sum_{v\in S}(d_G(v)-2d^+_D(v))  \ge
  \sum_{v\in S}(\varepsilon d_G(v)-2\varepsilon d^+_D(v)) =
  \sum_{v\in S}\varepsilon(d_G(v)-2f(v))$, where $f(v)=\frac{1-\varepsilon}{2} d_G(v)+\varepsilon d^+_D(v)$. If $f$ is  integer-valued, 
then by Theorem~\ref{thm:Hakimi}, one can deduce that there is an orientation $D_0$ such that  for all $v\in V(G)$,
$d^+_{D_0}(v)=f(v)$. This can complete the proof.
}\end{proof}
\begin{cor}
{Let $G$ be a graph and let $k$ and $k_0$ be two odd positive integers with $k_0\le k$.
If $G$ has an orientation  $D$  such that  for all $v\in V(G)$,
$d^+_D(v)-d_G(v)/2\in \{0,\pm k/2\}$, then it has an orientation $D_0$
 such that for all $v\in V(G)$, $d^+_{D_0}(v)-d_G(v)/2\in \{0,\pm k_0/2\}$.
}\end{cor}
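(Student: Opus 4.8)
The plan is to apply Corollary~\ref{cor:epsilon} with a single well-chosen value of $\varepsilon$. I would set $\varepsilon = k_0/k$; since $k$ and $k_0$ are positive integers with $k_0 \le k$, this is a rational number in $(0,1]$, as the corollary requires. The key observation is that the target function
$$f(v) = \tfrac{1-\varepsilon}{2}d_G(v) + \varepsilon\, d^+_D(v)$$
appearing in that corollary simply rescales the deviation of $D$ from the half-degree: a one-line computation gives $f(v) - d_G(v)/2 = \varepsilon\bigl(d^+_D(v) - d_G(v)/2\bigr)$. Consequently, if $d^+_D(v) - d_G(v)/2 \in \{0, \pm k/2\}$ for every $v$, then $f(v) - d_G(v)/2 \in \{0, \pm \varepsilon k/2\} = \{0, \pm k_0/2\}$, which is exactly the deviation prescribed for the desired orientation $D_0$.

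It then remains only to verify the integrality hypothesis of Corollary~\ref{cor:epsilon}, namely that $f(v)$ is an integer for every vertex $v$; this is where the oddness of $k$ and $k_0$ enters, and it is the only point requiring care. I would argue by cases on the deviation of $D$ at $v$. If $d^+_D(v) - d_G(v)/2 = 0$, then $d_G(v)$ must be even (since $d^+_D(v)$ is an integer), so $f(v) = d_G(v)/2$ is an integer. If instead $d^+_D(v) - d_G(v)/2 = \pm k/2$, then $(d_G(v) \pm k)/2$ is an integer, and because $k$ is odd this forces $d_G(v)$ to be odd; then $f(v) = d_G(v)/2 \pm k_0/2 = (d_G(v) \pm k_0)/2$ is again an integer, since $k_0$ is odd as well. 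Thus $f$ is integer-valued, Corollary~\ref{cor:epsilon} supplies an orientation $D_0$ with $d^+_{D_0}(v) = f(v)$, and by the rescaling identity its out-degrees satisfy $d^+_{D_0}(v) - d_G(v)/2 \in \{0, \pm k_0/2\}$, as claimed.

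The argument is short, and I do not expect a genuine obstacle beyond the parity bookkeeping in the integrality check: it is essential there that both $k$ and $k_0$ are odd, so that the half-integer shifts $\pm k/2$ and $\pm k_0/2$ are compatible with the half-integer $d_G(v)/2$ precisely when $d_G(v)$ is odd. The whole point is that Corollary~\ref{cor:epsilon} converts the scaled out-degree prescription into an actual orientation, and the linear rescaling of deviations by the factor $\varepsilon = k_0/k$ is what allows a single application of it to suffice.
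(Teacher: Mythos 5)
Your proposal is correct and matches the paper's own proof exactly: both apply Corollary~\ref{cor:epsilon} with $\varepsilon=k_0/k$ and observe that the deviation from $d_G(v)/2$ rescales from $\{0,\pm k/2\}$ to $\{0,\pm k_0/2\}$. Your parity argument for the integrality of $f$ is a detail the paper leaves implicit, and it is handled correctly.
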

\begin{proof}
{Apply Corollary~\ref{cor:epsilon} with $\varepsilon=k_0/k$. Note that
if $d^+_D(v)=d_G(v)/2$, then $(1-\varepsilon) d_G(v)/2+\varepsilon d^+_D(v)=d_G(v)/2$,
and if $d^+_{D}(v)=d_G(v)/2\pm k/2$, then $(1-\varepsilon) d_G(v)/2+\varepsilon d^+_D(v)=d_G(v)/2\pm k/2$.
}\end{proof}
The following theorem is an edge-connected  reformulation of Frank and Gy{\'a}rf{\'a}s' Theorem.
\begin{thm}{\rm (\cite{Frank-Gyrfas-1976})}
{Let $G$ be a graph and let $\p$ and $\q$ be two integer-valued functions on $V(G)$ with $\p\le \q$.
Then $G$ has an orientation such that for each vertex $v$, $\p (v)\le d^+_G(v)\le \q(v)$, if and only if 
 for all $S\subseteq V(G)$.
$$ \max\{\sum_{v\in S}(2\p(v) -d_G(v)),  \sum_{v\in S}(d_G(v)-2\q(v))\}\le d_G(S),$$ 
}\end{thm}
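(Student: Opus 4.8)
The plan is to reduce this statement directly to the original Frank--Gy{\'a}rf{\'a}s criterion (Theorem~\ref{thm:Into:Frank-Gyrfas}) by means of the same elementary degree-counting identity that was used to reformulate Hakimi's Theorem in Theorem~\ref{thm:Hakimi}. The crucial observation is that for every vertex set $S$ one has $\sum_{v\in S}d_G(v)=2e_G(S)+d_G(S)$, since each edge with both ends in $S$ is counted twice while each edge with exactly one end in $S$ is counted once; equivalently, $e_G(S)=\tfrac12\big(\sum_{v\in S}d_G(v)-d_G(S)\big)$.

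First I would invoke Theorem~\ref{thm:Into:Frank-Gyrfas}, which asserts that the desired $(\p,\q)$-orientation exists if and only if, for all $S\subseteq V(G)$, both $e_G(S)\le \sum_{v\in S}\q(v)$ and $e_G(S)\le \sum_{v\in S}(d_G(v)-\p(v))$ hold simultaneously. The task then becomes to rewrite each of these two inequalities in terms of $d_G(S)$ using the identity above, after which both directions of the claimed biconditional follow at once from the biconditional supplied by Theorem~\ref{thm:Into:Frank-Gyrfas}.

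Substituting $e_G(S)=\tfrac12\big(\sum_{v\in S}d_G(v)-d_G(S)\big)$ into the first inequality and clearing the factor $2$ turns $e_G(S)\le\sum_{v\in S}\q(v)$ into $\sum_{v\in S}(d_G(v)-2\q(v))\le d_G(S)$, and the same substitution transforms the second inequality $e_G(S)\le\sum_{v\in S}(d_G(v)-\p(v))$ into $\sum_{v\in S}(2\p(v)-d_G(v))\le d_G(S)$. Since both reformulated inequalities are required to hold for each $S$, their conjunction is exactly the condition that the maximum of the two left-hand sides is at most $d_G(S)$, which is the stated criterion.

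There is no genuine obstacle here: the argument is a purely algebraic transcription, and the only point demanding a little care is tracking which of the two Frank--Gy{\'a}rf{\'a}s inequalities produces the $\p$-term and which produces the $\q$-term after the substitution and the division by $2$. The payoff of the $d_G(S)$-formulation, rather than the $e_G(S)$-formulation, is that it makes transparent the role that edge-connectivity---which furnishes a lower bound on $d_G(S)$ for every nonempty proper subset $S$---will play in the orientation results that follow.
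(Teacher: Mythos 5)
Your proposal is correct and follows essentially the same route as the paper: apply Theorem~\ref{thm:Into:Frank-Gyrfas} and translate both inequalities via the identity $e_G(S)=\tfrac12\bigl(\sum_{v\in S}d_G(v)-d_G(S)\bigr)$. You carry out the substitution more carefully than the paper's one-line proof (which even mis-states the identity as $d_G(S)=e_G(S)-\sum_{v\in S}d_G(v)/2$), but the underlying argument is identical.
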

\begin{proof}
{Apply Theorem~\ref{thm:Into:Frank-Gyrfas} and use the fact that $d_G(S)=e_G(S)-\sum_{v\in S}d_G(v)/2$.
}\end{proof}
\subsection{Defective $\{\p,\q\}$-orientations}
In order to prove Theorem~\ref{Into:thm:p,q-orientation}, we shall first  formulate a weaker  version.
For this purpose, we need the following two lemmas.  The first one guarantees the existence of modulo orientations with bounded out-degrees in edge-connected graphs which  is a refinement of the main result in~\cite{Lovasz-Thomassen-Wu-Zhang-2013}.
\begin{lem}{\rm (\cite{ModuloBounded})}\label{lem:modulo:3k-3}
{Let $G$ be a graph, let $n$ be a positive integer, and let $p:V(G)\rightarrow Z_n$ be a mapping satisfying 
$|E(G)|\stackrel{n}{\equiv}\sum_{v\in V(G)}p(v)$.
If $G$ is $(3n-3)$-edge-connected, then it has a $p$-orientation such that for each vertex $v$,
$|d^+_G(v) -d_G(v)/2|<n$.
Furthermore, for an arbitrary vertex $z$, we can  have 
$-x\le d^+_G(z)-d_G(z)/2<n-x$, where $x$ is an arbitrary real number $x\in [0, n)$.
}\end{lem}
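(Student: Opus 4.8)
The plan is to fix the residues by working inside the (nonempty) family of $p$-orientations and then to \emph{balance} such an orientation. Nonemptiness is exactly the Lov\'asz--Thomassen--Wu--Zhang theorem that this lemma refines: since $G$ is $(3n-3)$-edge-connected and $|E(G)|\equiv\sum_{v}p(v)\pmod n$, at least one $p$-orientation exists. For an orientation $D$ write $\sigma(v)=d^+_D(v)-d^-_D(v)=2d^+_D(v)-d_G(v)$; then $\sum_{v}\sigma(v)=0$, the residue $\sigma(v)\bmod 2n$ is the same for every $p$-orientation (as $2d^+_D(v)\equiv 2p(v)\pmod{2n}$), and the target bound $|d^+_D(v)-d_G(v)/2|<n$ is exactly $|\sigma(v)|\le 2n-1$. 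So it suffices to produce a $p$-orientation with $\max_v|\sigma(v)|\le 2n-1$.

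First I would try the natural extremal approach: among all $p$-orientations choose one minimizing $\Phi=\sum_{v}\sigma(v)^2$, and assume for contradiction that some vertex $u$ has $\sigma(u)\ge 2n$ (the opposite sign is handled by reversing every edge, which carries a $p$-orientation to the analogous orientation with $\sigma\mapsto-\sigma$). The residue-preserving rebalancing move is to reverse $n$ edge-disjoint directed paths from $u$ to a \emph{single} under-loaded vertex $w$ with $\sigma(w)\le\sigma(u)-2n-1$: every internal out-degree is unchanged, $d^+_D(u)$ drops by $n$ and $d^+_D(w)$ rises by $n$, so all residues mod $n$ survive, while a short computation gives $\Delta\Phi\le 8n^2-4n(\sigma(u)-\sigma(w))\le-4n<0$, contradicting minimality. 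The connectivity needed for \emph{reaching the deficient set} follows cleanly from the hypothesis: if some $A\ni u$ avoiding every such $w$ had at most $n-1$ edges leaving it, then, since $d_G(A)\ge 3n-3$, at least $2n-2$ edges enter $A$, whence $\sum_{v\in A}\sigma(v)\le(n-1)-(2n-2)=1-n$; but each $v\in A$ has $\sigma(v)\ge\sigma(u)-2n\ge 0$ and $u\in A$ has $\sigma(u)\ge 2n$, forcing $\sum_{v\in A}\sigma(v)\ge 2n$, a contradiction. Hence the minimum directed cut from $u$ to the deficient set is at least $n$.

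The main obstacle is precisely the gap between \emph{the set} and \emph{a single vertex}. Reversing one directed path shifts the residues at its two ends by $\pm1$, so to stay inside the family of $p$-orientations I am forced to send all $n$ paths to the \emph{same} endpoint $w$; Menger applied to the deficient set only yields $n$ edge-disjoint paths ending at possibly distinct vertices, whose residues would then be corrupted by non-multiples of $n$. Upgrading ``$n$ edge-disjoint paths to the deficient set'' to ``$n$ edge-disjoint paths to one deficient vertex'' is the crux, and I do not expect the one-line cut count above to supply it. The route I would push through is the reduction machinery already underlying the existence statement: induct on $|V(G)|+|E(G)|$, using Mader-type edge splitting (lifting, in the sense defined above) to lower degrees while preserving $(3n-3)$-edge-connectivity among the remaining vertices, together with contraction of a highly edge-connected piece to a single vertex. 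At each reduction one prescribes a \emph{balanced} modular target and checks that re-expanding a lifted pair $xv,vy$ (orienting $x\to v\to y$ or $y\to v\to x$) changes $d^+(v)$ by exactly $1$, so the running window $|d^+(v)-d_G(v)/2|<n$ can be maintained throughout. Carrying this out-degree window through the splitting/contraction induction is the real work.

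Finally, the ``furthermore'' for a prescribed vertex $z$ and parameter $x\in[0,n)$ requires $\sigma(z)$ to lie in the half-open length-$2n$ window $[-2x,\,2n-2x)$. Since $\sigma(z)\bmod 2n$ is fixed, this window contains \emph{exactly one} admissible value, so the condition pins $d^+_D(z)$ to a single integer. I would obtain it by rerunning the same argument with an asymmetric objective at $z$---replacing the symmetric penalty $\sigma(z)^2$ by one minimized at the unique admissible value in the window, equivalently by attaching a small auxiliary gadget at $z$ that biases its excess---so that the extremal orientation places $z$'s out-degree in the prescribed window while every other vertex keeps $|\sigma(v)|\le 2n-1$.
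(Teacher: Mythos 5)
This lemma is not proved in the paper at all: it is imported verbatim from \cite{ModuloBounded} (arXiv:1702.07039), so there is no in-paper argument to compare yours against. Judged on its own, your proposal contains a genuine gap, one that you yourself flag. The set-up is fine: existence of some $p$-orientation from $(3n-3)$-edge-connectivity, the invariance of $\sigma(v)=2d^+_D(v)-d_G(v)$ modulo $2n$, the reformulation of the bound as $|\sigma(v)|\le 2n-1$, and the cut count showing that a vertex $u$ with $\sigma(u)\ge 2n$ sends $n$ edge-disjoint directed paths into the deficient set are all correct. But the rebalancing step that would finish the extremal argument requires all $n$ paths to terminate at \emph{one} deficient vertex $w$, and the edge-connectivity hypothesis genuinely does not supply this: for a set $A$ containing $u$ together with several deficient vertices, $\sum_{v\in A}\sigma(v)$ can be arbitrarily negative, so the directed out-cut of $A$ can drop below $n$ and Menger gives nothing toward a single target. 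This single-target redirection is precisely the obstruction that forces the Thomassen/Lov\'asz--Thomassen--Wu--Zhang lifting-and-contraction induction, and your proposal at that point simply defers to ``the reduction machinery already underlying the existence statement'' without executing it. Carrying the window $|d^+_G(v)-d_G(v)/2|<n$, and the sharper one-sided window $-x\le d^+_G(z)-d_G(z)/2<n-x$ at a prescribed vertex, through every lifting and contraction step is the entire content of the cited lemma, not a routine check; the ``asymmetric penalty at $z$'' idea for the furthermore-part inherits the same unresolved crux. So what you have is an honest reduction to the hard part plus a correct verification that the easy extremal approach cannot close it, but not a proof.
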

\begin{lem}{\rm (see \cite{Edmonds-Johnson-1973})}\label{lem:parityforest}
{Let $G$ be a connected graph with $Q\subseteq V(G)$. 
If $|Q|$ is even, then $G$ has a spanning forest $F$ such that $Q=\{v\in V(F): d_F(v) \text{ is odd}\}$.
}\end{lem}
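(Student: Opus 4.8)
The plan is to reduce the problem to choosing a suitable edge set inside a fixed spanning tree. Since $G$ is connected it has a spanning tree $T$, and for any $E_0\subseteq E(T)$ the spanning subgraph $F=(V(G),E_0)$ is automatically a forest (being a subgraph of the tree $T$) that contains every vertex of $G$, hence a spanning forest of $G$. So it suffices to produce $E_0\subseteq E(T)$ for which the odd-degree vertices of $F$ are exactly $Q$. This eliminates every cycle-related difficulty and turns the statement into a parity-bookkeeping problem on the tree $T$.

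To construct $E_0$, I would first use that $|Q|$ is even to split $Q$ into $m=|Q|/2$ disjoint pairs, say $Q=\{a_1,b_1,\dots,a_m,b_m\}$. For each pair let $P_i$ be the unique path of $T$ joining $a_i$ to $b_i$, viewed as a set of edges, and set $E_0=P_1\,\triangle\,P_2\,\triangle\cdots\triangle\,P_m$, the symmetric difference of these edge sets. The forest $F=(V(G),E_0)$ is then the candidate.

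The key step is the degree computation modulo $2$. Writing $\chi_i(e)\in\{0,1\}$ for the indicator that $e\in P_i$, an edge lies in $E_0$ exactly when $\sum_i\chi_i(e)$ is odd, so $d_F(v)\equiv\sum_{e\ni v}\sum_i\chi_i(e)=\sum_i d_{P_i}(v)\pmod 2$. Since each $P_i$ is a path, $d_{P_i}(v)$ is odd precisely when $v\in\{a_i,b_i\}$ and even otherwise; as the pairs are disjoint, the right-hand side is odd exactly for $v\in Q$. Hence the odd-degree set of $F$ equals $Q$, as required, and the case $Q=\emptyset$ is handled by the empty forest.

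I expect the only real care to lie in this parity bookkeeping; everything else (existence of $T$, and $F$ being a spanning forest) is immediate. As a backup I would keep a constructive root-and-sweep argument: root $T$, process its vertices from the leaves upward, and at each non-root vertex decide whether to retain the edge to its parent so as to correct that vertex's parity. This forces every non-root vertex to have odd degree iff it lies in $Q$; then, because the number of odd-degree vertices of any graph is even while $|Q|$ is even, the root's parity comes out correct on its own. Either route proves the claim, and I would present the symmetric-difference version as the cleaner one.
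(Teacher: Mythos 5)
Your proof is correct. The paper does not prove this lemma at all --- it is quoted from the literature (Edmonds--Johnson) and used as a black box --- so there is no in-paper argument to compare against. Your symmetric-difference-of-tree-paths construction is the standard proof of this classical fact: the parity computation $d_F(v)\equiv\sum_i d_{P_i}(v)\pmod 2$ is right, the disjointness of the pairs makes the right-hand side odd exactly on $Q$, and the degenerate case $Q=\emptyset$ is handled. The argument is complete as written.
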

The following theorem gives a sufficient edge-connectivity for the existence of defective $\{\p,\q\}$-orientations.
\begin{thm}\label{thm:z-defective-p,q}
{Let $G$ be a graph with $z\in V(G)$, let $k$ be a positive integer, and let $\p$ and $\q$ be two  integer-valued functions on $V(G)$  in which for each vertex $v$, $\p(v)\le d_G(v)/2\le \q(v)$ and $|\q(v)-\p(v)|\le k$.
If $G$ is $(\frac{3}{2}k+1)(k-1)$-tree-connected, then it has an orientation such  that for each $v\in V(G)\setminus \{z\}$,
$$d^+_G(v)\in \{\p(v),\q(v)\}.$$
Furthermore, for the vertex $z$, we can  have 
$-x\le  d^+_G(z)-d_G(z)/2<k-x$, where $x$ is an arbitrary real number $x\in [0, k)$.
}\end{thm}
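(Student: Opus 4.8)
The plan is to recenter everything at the half-degree and then induct on $k$. For each vertex write $\alpha(v)=d_G(v)/2-\p(v)\ge 0$ and $\beta(v)=\q(v)-d_G(v)/2\ge 0$, so that $\alpha(v)+\beta(v)=\q(v)-\p(v)\le k$, and the requirement $d^+_G(v)\in\{\p(v),\q(v)\}$ says the out-degree must sit at one of the two endpoints of an interval of length at most $k$ straddling $d_G(v)/2$. A nearly balanced orientation already gives a $(\p,\q)$-orientation, so the entire difficulty is to force each out-degree (bar $z$) to an \emph{extreme} value rather than an interior one. This is exactly what cannot be done cheaply, and is why the tree-connectivity must enter.

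The engine is induction on $k$, lowering the gap bound by one at each step, and the arithmetic is dictated by the hypothesis: writing $M(k)=(\frac{3}{2}k+1)(k-1)$ for the required tree-connectivity, one checks that $M(k)-M(k-1)=3k-2$ and $M(1)=0$, so there are $k-1$ steps and each may spend $3k-2$ units of tree-connectivity. The base case $k=1$ is a balanced orientation: for $v\neq z$ the two targets are forced to be $\lfloor d_G(v)/2\rfloor$ and $\lceil d_G(v)/2\rceil$, and inside $z$'s own component one can choose the sign of $d^+_G(z)-d_G(z)/2$ so as to land in the prescribed half-open window of length $1$, which needs no connectivity beyond $M(1)=0$.

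For the inductive step I would first narrow each interval $[\p(v),\q(v)]$ of length $k$ to a subinterval of length $k-1$ still containing $d_G(v)/2$, by discarding whichever endpoint is farther from $d_G(v)/2$; since $\alpha(v)+\beta(v)\ge 1$ whenever the gap is nonzero, at least one such discard is always legal. Then peel off a spanning subgraph $H$ carrying about $3k-3$ edge-disjoint spanning trees, so that $H$ is $(3k-3)$-edge-connected while $G-H$ stays $M(k-1)$-tree-connected. Apply the inductive hypothesis to $G-H$ with the narrowed functions (taken relative to $d_{G-H}(v)/2$), and use $H$ together with the bounded modulo-orientation Lemma~\ref{lem:modulo:3k-3} (modulus $k$, residues read off from the targets, and its ``furthermore'' clause to steer $z$) to supply on each vertex the $0$ or $\pm 1$ correction that moves an out-degree sitting on a newly created interior endpoint back out to $\p(v)$ or $\q(v)$, dumping the aggregate discrepancy into $z$. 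Lemma~\ref{lem:parityforest} is the natural tool to make the correction set even and to carry the parities of these $\pm 1$ shifts along a spanning forest.

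The main obstacle is the simultaneous \emph{feasibility} of these extremal choices as a genuine orientation, together with the way the split between $H$ and $G-H$ shifts the center $d_G(v)/2$ by $d_H(v)/2$. By Hakimi's criterion (Theorem~\ref{thm:Hakimi}) the chosen function $g$ with $g(v)\in\{\p(v),\q(v)\}$ is realizable only if $\sum_{v\in S}(d_G(v)-2g(v))\le d_G(S)$ for every $S$, and the left-hand side can be as large as $2k|S|$ if too many vertices of $S$ are pushed to their low target; keeping it within the tree-connectivity budget is precisely the role of the tight window $|d^+_G(v)-d_G(v)/2|<k$ and the residue pinning of Lemma~\ref{lem:modulo:3k-3}, backed by Corollary~\ref{cor:changing-orientations}, since fixing the value modulo $k$ leaves only the intended representative alive in the window. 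The most delicate remaining bookkeeping is to propagate the defect at $z$ through all $k-1$ steps, tracking how each correction perturbs $d^+_G(z)$ and re-invoking the ``furthermore'' clause with an updated shift, so that at the end $z$ lands in the single half-open window $-x\le d^+_G(z)-d_G(z)/2<k-x$.
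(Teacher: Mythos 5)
Your budget arithmetic ($M(k)-M(k-1)=3k-2$, i.e.\ $3k-3$ trees for the modulo lemma plus one spanning tree for parity) and your choice of tools match the paper, but the core mechanism of your inductive step --- ``narrow each interval by one, then correct the vertices that landed on a newly created interior endpoint by $0$ or $\pm 1$ using $H$, dumping the aggregate discrepancy into $z$'' --- does not work, for two concrete reasons. First, Lemma~\ref{lem:modulo:3k-3} with modulus $k$ cannot deliver a \emph{prescribed} correction of $+1$ (or $-1$): it only pins $d^+_H(v)-d_H(v)/2$ to a residue class mod $k$ inside the window $(-k,k)$, and for residue $1$ that window contains the two representatives $1$ and $1-k$; you have no control over which one occurs, so a vertex needing $+1$ may receive $1-k$. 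Second, even granting exact corrections, any orientation of $H$ satisfies $\sum_v d^+_H(v)=|E(H)|=\sum_v d_H(v)/2$, so the per-vertex corrections must sum to zero over $V(G)$; the corrections your induction demands are determined by the (uncontrolled) outcome on $G-H$ and have no reason to cancel, so the residue you ``dump into $z$'' can be of order $|V(G)|$, obliterating the required window $-x\le d^+_G(z)-d_G(z)/2<k-x$ of width only $k$. (Trying to realize the exact corrections via Theorem~\ref{thm:Hakimi}/Corollary~\ref{cor:changing-orientations} instead runs into the same conservation constraint plus an edge-connectivity requirement growing with the number of corrected vertices.)

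The paper avoids ever creating interior endpoints. It stratifies the vertices by their \emph{exact} gap $i=|\q(v)-\p(v)|$ and decomposes $G$ into pieces $G_i=F_i\cup H_i$ with $H_i$ being $(3i-3)$-tree-connected and $F_i$ a parity-correcting forest inside a spanning tree (Lemma~\ref{lem:parityforest}), arranged so that $d_{G_i}(v)$ is even for $v\notin V_i$ and has the parity of $d_G(v)$ for $v\in V_i$. On $G_i$, the two original targets $\p(v)-d_G(v)/2$ and $\q(v)-d_G(v)/2$ for a gap-$i$ vertex differ by exactly $i$ and straddle $0$, so they are precisely the representatives of a single residue class mod $i$ lying in the window $|d^+_{G_i}(v)-d_{G_i}(v)/2|<i$ of Lemma~\ref{lem:modulo:3k-3}; the inherent two-fold ambiguity of the modulo orientation is exactly the freedom ``$\p(v)$ or $\q(v)$'', rather than an error to be corrected. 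All other vertices are kept exactly balanced in $G_i$ (their even $G_i$-degree makes this a residue-$0$ condition with a unique representative in the window), so the only vertex ever carrying a defect is $z$, which the ``furthermore'' clause steers level by level via the accumulated shifts $x_n$. If you reorganize your induction so that the peeled-off piece handles only the vertices of gap exactly $k$ in this residue-class manner (no narrowing, no correction phase), your argument becomes the paper's proof.
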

\begin{proof}
{We may assume that $k\ge 2$, as the assertion trivially holds when $k=1$. 
Since $G$ is $m$-tree-connected, we  can decompose $G$ into
  $k-1$ spanning trees $T_2,\ldots, T_k$ and
 $k-1$ factors $H_2,\ldots, H_k$ such that every $H_i$ is $(3i-3)$-tree-connected, where 
$m=\sum_{2\le i\le k}(3i-3)+k-1$. 
For each  $i\in \{2,\ldots, k\}$, define
 $$V_i=\{v\in V(G)\setminus \{z\}:|\p(v)-\q(v)|=i\},$$
and  $U_i=V(G)\setminus (V_i\cup \{z\})$.
 In addition,  by Lemma~\ref{lem:parityforest}, we can take $F_i$ to be a spanning forest of $T_i$ such that
 for each $v\in U_i$, 
$d_{F_i}(v)+d_{H_i}(v)$ is even and
 for each  $v\in V_i$, $d_{F_i}(v)+d_{H_i}(v)$ and $d_{G}(v)$  have the same parity.
Note that the parity of $d_{F_i}(z)$ can be determined by the degree of the other vertices.
Define $G_i=F_i\cup H_i$.
Finally, define $G_1=G\setminus E(G_2\cup \cdots \cup G_k)$, 
 $V_1=\{v\in V(G)\setminus \{z\}:|\p(v)-\q(v)|\le 1\}$, and  $U_1=\emptyset$.
According to the construction,  
 for each  $v\in U_i$,
$d_{G_i}(v)$ is even, and
for each  $v\in  V_i$,
$d_{G_i}(v)$ and $d_{G}(v)$ have the same parity,  whether   $2\le  i\le k$ or $i=1$.
By applying  Lemma~\ref{lem:modulo:3k-3}, for each $n\in \{1,\ldots, k\}$, 
we can recursively consider an orientation for $G_n$ such that   
 for each $v\in U_n$, $d^+_{G_n}(v)=d_{G_n}(v)/2$,  and for each $v\in V_n$,
$$d^+_{G_n}(v)-d_{G_n}(v)/2\in \{\p(v)-d_G(v)/2, \q(v)-d_G(v)/2\}, $$
and also $-x_n\le d^+_{G_n}(z)-d_{G_s}(z)/2<k-x_n$,  
where
$$x_n=\sum_{1\le j< n}(d^+_{G_j}(z)- d_{G_j}(z)/2)+x.$$
Note that  $0\le x_{n+1} <k$, since $ x_{n+1}=x_n+d^+_{G_n}(z)-d_{G_n}(z)/2$.
Let $v\in V(G)\setminus \{z\}$ so that  $v\in V_i$ and $1\le i \le k$.
Therefore,
$$d^+_{G}(v)=\sum_{1\le j\le k}d^+_{G_j}(v)=
d^+_{G_i}(v)+\sum_{1\le j\le k, j\neq i}d_{G_j}(v)/2=
d^+_{G_i}(v)+d_{G}(v)/2-d_{G_i}(v)/2\in \{\p(v), \q(v)\}.$$ 
Furthermore
$-x\le  d^+_G(z)-d_G(z)/2<k-x$, since $0\le x_{k+1} <k$.
Hence the proof is completed.
}\end{proof}
%
%
%
\subsection{$\{\p,\q\}$-orientations}
In this section, we shall improve Theorem~\ref{thm:z-defective-p,q} by refining the condition for the  vertex $z$.
To do this, we first form the following lemma for working with integer numbers.
Note that the following upper bound of $k(k-1)$ is sharp by setting  $(m,n)=(k-1,k)$, $x_i=k$, and $y_j=k-1$, where $1\le i\le m$ and $1\le j\le n$.
\begin{lem}\label{lem:additive}
{Let  $x_1,\ldots,  x_m$ and  $y_1,\ldots,  y_{n}$ 
be  positive integers and let $k$ be the maximum of them.
If $\sum_{1\le i\le m}x_i=\sum_{1\le j\le n}y_j$ 
and for any two  integer sets $I\subseteq \{1,\ldots, m\}$ 
and $J \subseteq \{1,\ldots, n\}$
 satisfying $|I|+|J|<m+n$, $\sum_{i\in I}x_i\neq  \sum_{j\in J}y_j$,
then $$\sum_{1\le i\le m}x_i\le  k(k-1).$$
}\end{lem}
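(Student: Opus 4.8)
The plan is to reformulate the hypothesis in terms of subset sums and then to bound the common value $S:=\sum_{i}x_i=\sum_{j}y_j$ by a pigeonhole (discrepancy) argument on prefix sums, reducing the problem to bounding the number of summands.

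First I would record the reformulation. Write $X=\{\sum_{i\in I}x_i: I\subseteq\{1,\dots,m\}\}$ and $Y=\{\sum_{j\in J}y_j: J\subseteq\{1,\dots,n\}\}$ for the two sets of subset sums. Reading the hypothesis for the nontrivial pairs (that is, excluding $I=J=\emptyset$ and the full sets, which is clearly the intended meaning), it is equivalent to $X\cap Y=\{0,S\}$: any value $v$ with $0<v<S$ lying in both would be realized by sets with $|I|\le m-1$ and $|J|\le n-1$, hence $|I|+|J|<m+n$, which is forbidden. Since each $x_i\le k$ we have $S=\sum_{i=1}^{m}x_i\le km$, and likewise $S\le kn$, so $S\le k\min(m,n)$. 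Therefore it suffices to prove that the smaller family has at most $k-1$ members, i.e. $\min(m,n)\le k-1$.

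Next is the discrepancy tool. Fix any orderings and let $0=s_0<s_1<\dots<s_m=S$ and $0=u_0<u_1<\dots<u_n=S$ be the prefix sums of the $x_i$ and of the $y_j$. Because each $y_j\le k$, consecutive $u$'s differ by at most $k$, so for every $i$ the quantity $\delta_i:=s_i-\max\{u_r: u_r\le s_i\}$ lies in $\{0,1,\dots,k-1\}$. Moreover $\delta_i=0$ exactly when $s_i\in Y$, i.e. $s_i\in X\cap Y=\{0,S\}$, i.e. $i\in\{0,m\}$; hence $\delta_1,\dots,\delta_{m-1}\in\{1,\dots,k-1\}$. The key point is that these interior discrepancies are pairwise distinct: if $\delta_i=\delta_{i'}$ with $i'<i$, then $s_i-s_{i'}$ equals the difference of the two corresponding $u$-values, so $\sum_{l=i'+1}^{i}x_l=\sum_{r=a+1}^{b}y_r$ for some $a\le b$; unless $(i',i)=(0,m)$ this equates an interval of $x$'s with an interval of $y$'s using fewer than $m+n$ indices and is forbidden. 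Thus $i\mapsto\delta_i$ embeds $\{1,\dots,m-1\}$ into $\{1,\dots,k-1\}$, giving $m\le k$, and symmetrically $n\le k$. In particular $S\le k^2$ already follows cleanly.

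Finally comes the refinement that yields the sharp constant, and where I expect the real work to lie: ruling out $m=n=k$, the only remaining case with $\min(m,n)=k$. When $m=n=k$, both $i\mapsto\delta_i$ and the symmetric map $j\mapsto\epsilon_j$ (the discrepancies $\epsilon_j:=u_j-\max\{s_r: s_r\le u_j\}$ of the $u$'s against the $s$-grid) are bijections onto $\{1,\dots,k-1\}$, so every gap value is forced: the discrepancy $k-1$ on one side forces a gap of exactly $k$, and an element equal to $k$, on the other side, and so on down the scale. I would exploit this rigidity to locate a nontrivial coincidence $\sum_{I}x_i=\sum_{J}y_j$ between the two families, contradicting the hypothesis; this is the step I expect to be the main obstacle, since it must combine the two bijections simultaneously rather than use either discrepancy map alone. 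The extremal configuration $(m,n)=(k-1,k)$ with $x_i=k$ and $y_j=k-1$ shows both that $\min(m,n)=k-1$ is attainable and that the resulting bound $S\le k(k-1)$ cannot be improved.
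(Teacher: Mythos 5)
Your steps (1)--(3) are sound: the reformulation $X\cap Y=\{0,S\}$, the bound $\delta_i\in\{0,\ldots,k-1\}$, and the interval-coincidence argument for the injectivity of the interior discrepancies all check out, and together they give $m\le k$, $n\le k$, hence $S\le k\min\{m,n\}\le k^2$. But the proof as written stops short of the stated bound. You reduce the sharp constant to the claim $\min\{m,n\}\le k-1$, i.e.\ to ruling out $m=n=k$, and you explicitly leave that step open (``this is the step I expect to be the main obstacle''). Without it you only obtain $S\le k^2$, not $S\le k(k-1)$, so as submitted there is a genuine gap at the final step.

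The missing idea is small, and it is precisely the first line of the paper's own proof: the maximum $k$ cannot be attained by an $x_i$ and a $y_j$ simultaneously, because $x_i=k=y_j$ is itself a forbidden coincidence with $|I|+|J|=2<m+n$ (whenever $m+n>2$). Hence, after swapping the roles of the two families if necessary, you may assume every $x_i\le k-1$, and then your bound $m\le k$ immediately yields $S=\sum_i x_i\le (k-1)m\le k(k-1)$, with no need to analyze the case $m=n=k$ at all. With that one observation inserted, your argument is complete and is genuinely different from the paper's: the paper greedily interleaves the two sequences of partial sums into an alternating chain $0<g(1)\le f(1)\le\cdots$ and bounds $S$ directly as a sum of at most $2(k-1)$ pairwise distinct increments from $\{0,1,\ldots,k-1\}$, whereas you bound the \emph{number} of summands on one side and multiply by the maximum term size on that side; your route is arguably more transparent. (Both your argument and the paper's silently exclude the degenerate case $m=n=1$, where this ``without loss of generality'' is unavailable and the lemma in fact fails for $k=1$.)
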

\begin{proof}
{We may assume that $k$ is the maximum of $y_1,\ldots,  y_{n}$ and so $\max_{1\le i\le m}x_i < k$.
Let $I_0=J_0=\emptyset$ and $g(0)=f(0)=0$.
Let $s$ be a positive integer. 
If $I_1\cup \cdots \cup I_{s-1}\neq \{1,\ldots, m\}$, 
then we recursively define $I_s$ to be a nonempty subset of 
$\{1,\ldots, m\}\setminus (I_1\cup \cdots \cup I_{s-1})$
such that 
$$f(s-1) = \sum_{1\le t \le s-1}\sum_{j\in J_t}y_j\le \sum_{1\le t \le s}\sum_{i\in I_t}x_i= g(s).$$
If $J_1\cup \cdots \cup J_{s-1}\neq \{1,\ldots, n\}$, 
then we recursively define $J_s$ to be a nonempty subset of $\{1,\ldots, n\}\setminus (J_1\cup \cdots \cup J_{s-1})$
such that 
$$g(s)=\sum_{1\le t \le s}\sum_{i\in I_t}x_i\le \sum_{1\le t \le s}\sum_{j\in J_t}y_j=f(s).$$
%
We consider $I_s$ and $J_s$ with the minimum size.
These can imply that $g(1)-f(0)=\min_{1\le i\le m}x_i \le  k-1$ and 
$g(s)-f(s-1)\le \min_{i\in I_s} x_i-1\le k-1$ when $s>1$ and $f(s)-g(s)\le \min_{j\in J_s} y_j-1\le k-1$.
According to the assumption on  summations of $x_i$ and $y_j$, we must also have $f(s-1)\neq g(s)$  and $g(s)\neq f(s)$.
Define $g_d(s)=g(s)-f(s-1)$ and $f_d(s)=f(s)-g(s)$.
Assume that  $I_1\cup  \cdots \cup I_q=\{1,\ldots, m\}$ and $J_1\cup \cdots \cup J_q=\{1,\ldots, n\}$ so that
$$0=g(0)=f(0)<g(1)< f(1)< \cdots <f(q-1)\le  g(q)=\sum_{1\le i\le m}x_i= \sum_{1\le j\le n}y_j=f(q).$$
Let $s,s'\in \{1,\ldots, q\}$ with $s\ge s'$.
If $f_d(s)=f_d(s')$, then 
$$\sum_{s'< t\le s}\sum_{i\in I_t}x_i=g(s)-g(s')=f(s)-f(s')=\sum_{s'< t\le s}\sum_{j\in J_t}y_j.$$
According to the assumption on  summations of $x_i$ and $y_j$, we must have $s=s'$.
Note that we consider $J_q$ to be the empty, and consider  $I_q$ to be the empty set when $g(q)=f(q-1)$.
Similarly, if $g_d(s)=g_d(s')$, then $s=s'$.
In other words, $g_d$ and $f_d$  are injective  functions with the restricted domain $\{1,\ldots, q\}$ and the co-domain 
$\{0,1,\ldots, k-1\}$.
Therefore,
$$\sum_{1\le i\le m}x_i=g(q)=
\sum_{1\le s\le q}g_d(s)+\sum_{1\le  s\le  q-1}f_d(s)
 \le 2\sum_{1\le i\le k-1}i=k(k-1).$$
Hence the proof is completed.
}\end{proof}
The following theorem gives a sufficient edge-connectivity for the existence of $\{\p,\q\}$-orientations..
\begin{thm}\label{thm:exact-p,q}
{Let $G$ be a $4k^2$-tree-connected graph  and let $\p$ and $\q$ be two integer-valued functions on $V(G)$  in which for each $v\in V(G)$, $\p (v)\le d_G(v)/2\le\q(v)$ and $|\q(v)-\p(v)|\le k$.
Then $G$ has an orientation such  that  for each $v\in V(G)$,
$$d^+_G(v)\in \{\p(v),\q(v)\},$$
if and only if  there is an integer-valued function $t$ on $V(G)$ in which $\t(v)\in \{\p(v),\q(v)\}$ for  each $v\in V(G)$,  and 
$|E(G)|=\sum_{v\in V(G)}\t(v)$.
Furhermore, for an arbitrary given  vertex $z$, we can have $d^+_G(z)=\t(z)$.
}\end{thm}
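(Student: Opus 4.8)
The necessity is immediate: given an orientation with $d^+_G(v)\in\{\p(v),\q(v)\}$ for all $v$, set $\t(v)=d^+_G(v)$; then $\t(v)\in\{\p(v),\q(v)\}$ and $\sum_{v}\t(v)=\sum_v d^+_G(v)=|E(G)|$. The content is the sufficiency, and the plan is to produce a valid $\{\p,\q\}$-orientation from the $z$-defective one supplied by Theorem~\ref{thm:z-defective-p,q}, correcting the out-degree at $z$ by rerouting flow through an auxiliary balanced factor.

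First I would split off a reroute factor. Since $G$ is $4k^2$-tree-connected while Theorem~\ref{thm:z-defective-p,q} needs only $(\tfrac32 k+1)(k-1)$-tree-connectivity, I would decompose $G$ into two edge-disjoint factors $G_0$ and $G_1$: assign $(\tfrac32 k+1)(k-1)$ of the spanning trees to $G_1$ and the remaining ${\approx}\,\tfrac52 k^2$ of them to $G_0$, arranging $G_0$ to be Eulerian via Lemma~\ref{lem:parityforest} (the surplus trees absorb the parity corrections). Orient $G_0$ by an Eulerian orientation, so $d^+_{G_0}(v)=d_{G_0}(v)/2$ for every $v$; crucially, this balanced orientation satisfies $d^+_{G_0}(W)=d_{G_0}(W)/2\ge k(k-1)$ for every vertex set $W$, i.e. it is $k(k-1)$-arc-strong. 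Setting $\p_1=\p-d_{G_0}/2$ and $\q_1=\q-d_{G_0}/2$ (integer-valued, since $G_0$ is Eulerian) preserves $\p_1(v)\le d_{G_1}(v)/2\le \q_1(v)$ and $|\q_1-\p_1|\le k$, so Theorem~\ref{thm:z-defective-p,q} applies to $G_1$. Superimposing the two orientations yields an orientation $D$ of $G$ with $d^+_D(v)\in\{\p(v),\q(v)\}$ for every $v\neq z$ and with $d^+_D(z)$ in a window of width $k$ about $d_G(z)/2$, which I would position (through the free parameter $x$) to contain $\t(z)$, so that $\Delta:=d^+_D(z)-\t(z)$ satisfies $|\Delta|<k$.

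Next I would quantify the correction. Because $\sum_v d^+_D(v)=|E(G)|=\sum_v\t(v)$, one has $\Delta=\sum_{v\neq z}(\t(v)-d^+_D(v))$; writing $A$ for the vertices with $d^+_D(v)=\p(v)$ but $\t(v)=\q(v)$, and $B$ for those with $d^+_D(v)=\q(v)$ but $\t(v)=\p(v)$, and putting $g(v)=\q(v)-\p(v)$, this reads $\Delta=\sum_{v\in A}g(v)-\sum_{v\in B}g(v)$. I would then choose $A'\subseteq A$ and $B'\subseteq B$ with $\sum_{A'}g-\sum_{B'}g=\Delta$ and $\sum_{A'}g+\sum_{B'}g$ minimum. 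Minimality forbids any nonempty cancelling subpair, and, comparing the sequence $\{g(v):v\in A'\}$ with $\{g(w):w\in B'\}\cup\{\Delta\}$ (assuming $\Delta>0$; the case $\Delta<0$ is symmetric and $\Delta=0$ is vacuous), it forbids any proper matching partial sum. Lemma~\ref{lem:additive} then caps the common sum by $k(k-1)$, so the total reroute demand $\sum_{A'}g=\Delta+\sum_{B'}g\le k(k-1)$.

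Finally I would realize the correction as a flow in the oriented $G_0$. Regard $z$ (supply $\Delta$) and the vertices of $B'$ (supplies $g(w)$) as sources and the vertices of $A'$ (demands $g(v)$) as sinks; the total value is $\sum_{A'}g\le k(k-1)$, so by the $k(k-1)$-arc-strong connectivity of the Eulerian orientation this transportation is feasible and decomposes into edge-disjoint directed paths of $G_0$. Reversing all of them leaves every internal out-degree unchanged, drives $d^+(z)$ down by $\Delta$ to exactly $\t(z)$, raises each $v\in A'$ from $\p(v)$ to $\q(v)$, and lowers each $w\in B'$ from $\q(w)$ to $\p(w)$; hence every out-degree lands in $\{\p(v),\q(v)\}$ with $d^+_G(z)=\t(z)$, as required. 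The main obstacle is precisely this realizability step: high edge-connectivity of $G$ alone does not force directed reroute paths to exist, which is why I reserve an Eulerian — hence simultaneously balanced and $k(k-1)$-arc-strong — factor $G_0$. Matching its capacity $k(k-1)$ to the additive-lemma bound is what makes the hypothesis $4k^2$ suffice, and the one remaining point to check carefully is the parity and connectivity bookkeeping of extracting $G_0$ within that budget.
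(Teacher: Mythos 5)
Your proposal is correct and follows essentially the same strategy as the paper: split off an Eulerian correction factor, apply Theorem~\ref{thm:z-defective-p,q} to the remaining tree-connected factor with targets shifted by half the Eulerian degrees, bound the total correction by $2k(k-1)$ via a minimal zero-sum family and Lemma~\ref{lem:additive}, and then realize the correction inside the Eulerian factor (the paper does this last step by reorienting $H$ via Corollary~\ref{cor:changing-orientations}, which is equivalent to your flow/path-reversal argument). The only nominal discrepancy is your claim $|\Delta|<k$, which should read $|\Delta|\le k$, exactly as the paper's $|s(z)|\le k$; this does not affect the bound from Lemma~\ref{lem:additive}.
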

\begin{proof}
{Since every $2$-tree-connected graph has a spanning Eulerian subgraph~\cite{Jaeger-1979}, 
one can decompose $G$ into a $2k^2$-tree-connected graph $G_0$ and a $2k^2$-edge-connected Eulerian graph $H$.
By Theorem~\ref{thm:z-defective-p,q},
 the graph $G_0$ has an orientation such  that
for each $v\in V(G)\setminus \{z\}$, $d^+_{G_0}(v)\in \{\p(v)-d_{H}(v)/2,\q(v)-d_{H}(v)/2\}$, 
and $|d^+_{G_0}(z)-d_{G_0}(z)/2|\le k$ in which
$d^+_{G_0}(z)\ge d_{G_0}(z)/2$ if and only  if $t(z)\ge d_G(z)/2$.
For each vertex $v$, define $s(v)=\t(v)-d^+_{G_0}(v)-d_{H}(v)/2$. 
According to this definition, for each $v\in V(G)\setminus \{z\}$,
$s(v)=0$ when $d^+_{G_0}(v)=t(v)-d_H(v)/2$, and $|s(v)| =|\q(v)-\p(v)|\le k$ otherwise.
 In addition, $|s(z)|=|\t(z)-d_{G}(z)/2-(d^+_{G_0}(z)-d_{G_0}(z)/2) |\le k$. 
Let $S$ be a subset of $V(G)$ including $z$  satisfying 
$\sum_{v\in S}s(v)=0$.
Note that $V(G)$ is a candidate for $S$, since
$$\sum_{v\in V(G)}s(v)=\sum_{v\in V(G)}(\t(v)-d^+_{G_0}(v)-d_{H}(v)/2)=|E(G)|-|E(G_0)|-|E(H)|=0.$$
 Consider $S$ with the minimum $|S|$. 
Thus  for every nonempty proper subset $S_0$ of $S$, $\sum_{v\in S_0}s(v)\neq 0$. Otherwise, 
 $\sum_{v\in S\setminus S_0}s(v)= 0$ which is a contradiction,  because  either $S_0$ or $S\setminus S_0$ includes $z$.
Thus by Lemma~\ref{lem:additive} and the minimal property of $S$,  
one can conclude that $\sum_{v\in S}|s(v)|\le 2k(k-1)$.
More precisely, variables $x_i$ in  Lemma~\ref{lem:additive}  are those positive integers $|s(v)|$ with $s(v)> 0$ and 
  variables $y_j$ are those positive integers $|s(v)|$ with $s(v)< 0$, where $v\in S$. 
Since $H$ is $2k(k-1)$-edge-connected, by Corollary~\ref{cor:changing-orientations},
 it has an orientation such that for each $v\in S$, $d^+_{H}(v)=d_H(v)/2+s(v)$ and 
for each $v\in V(G)\setminus S$, $d^+_{H}(v)=d_H(v)/2$.
Note that $\sum_{v\in S}\max\{0, d_H(v)-2(d_H(v)/2+s(v))\}= \sum_{v\in S}|s(v)|\le 2k(k-1)$.
Consider the orientation of $G$ obtained from these orientations.
For each vertex $v$, 
$$d^+_{G}(v)= d^+_{G_0}(v)+d^+_{H}(v)=
 \begin{cases}
d^+_{G_0}(v)+d_H(v)/2+s(v)=\t(v)\in \{\p(v),\q(v)\},	&\text{if $v\in S$};\\ 
d^+_{G_0}(v)+d_{H}(v)/2\in \{\p(v),\q(v)\},	&\text{otherwise}.
\end {cases}$$
Hence the theorem holds.
}\end{proof}
\begin{remark}
{We will use the above-mentioned theorem to refine some results in \cite{AddarioBerry-Dalal-McDiarmid-Reed-Thomason-2007, AddarioBerry-Dalal-Reed-2008} for edge-connected graphs.
We will do it in a forthcoming paper.
}\end{remark}
\section{Partition-connected graphs: orientations with sparse lists on out-degrees}
In this subsection, we are going to prove the following assertion on the existence of orientations with  sparse lists on out-degrees in partition-connected graphs.  For dense  lists in all graphs, it was investigated by Akbari, Dalirrooyfard, Ehsani, Ozeki, and Sherkati (2020) \cite{ADEOS}.
Before stating the main result, we need to recall the following lemma from~\cite{ModuloBounded}.
\begin{lem}{\rm (\cite{ModuloBounded})}\label{lem:preserving}
{Let $G$ be a general  graph with $z\in V(G)$ and  let  $l_0$ be a nonnegative integer-valued function on $V(G)$.
Assume that $z$ is not incident with loops.
If $G$ contains an $(m, l_0)$-partition-connected factor $H$ with $d_G(z)\ge 2d_H(z)-2l_0(z)-2$, then there are 
$d_H(z)-l_0(z)-1$ pair of edges incident with $z$ such that by lifting  them   the resulting general graph $G_0$ with
 $V(G_0)=V(G)\setminus \{z\}$ is still $(m,l_0)$-partition-connected.
}\end{lem}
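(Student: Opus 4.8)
The plan is to read this as a splitting-off (lifting) statement and to preserve $(m,l_0)$-partition-connectivity by working through a min-max (partition) characterization. First I would record such a characterization: combining the spanning-tree packing theorem of Nash-Williams~\cite{Nash-Williams-1961} and Tutte~\cite{Tutte-1961} for the $m$-tree-connected factor with the out-degree orientation criterion of Hakimi~\cite{Hakimi-1965} and Frank and Gy\'arf\'as~\cite{Frank-Gyrfas-1976} for the $l_0$-factor, one gets that a factor $H$ is $(m,l_0)$-partition-connected exactly when, for every partition $\mathcal{P}=\{V_1,\dots,V_t\}$ of $V(H)$, the number $e_H(\mathcal{P})$ of edges joining distinct parts is at least $m(t-1)$ plus an $l_0$-deficiency term that records the out-degree demand the parts impose. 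I call a partition \emph{tight} when this holds with equality.

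Next I would perform the liftings one pair at a time, keeping the partition inequality as an invariant. The heart of the matter is a local, Mader-type claim: while $z$ still carries enough edges, there is a pair $zx,zy$ whose lifting to $xy$ keeps every partition inequality valid. Lifting $zx,zy$ decreases $e(\mathcal{P})$ by $2$ only for partitions separating $x$ from $y$, so a forbidden pair is one separated by some tight partition; uncrossing the family of tight partitions via submodularity of the crossing function restricts their structure at $z$ and shows that not every candidate pair can be blocked. Iterating and deleting the two lifted edges at each step lowers $d(z)$ by $2$.

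For the bookkeeping I would invoke the hypothesis $d_G(z)\ge 2d_H(z)-2l_0(z)-2$, that is $d_G(z)\ge 2(d_H(z)-l_0(z)-1)$, which supplies enough incident edges at $z$ to furnish the $d_H(z)-l_0(z)-1$ admissible pairs; here $d_H(z)-l_0(z)-1$ is the number of lifts needed to reconnect, spanning-tree fashion (so with the usual ``$-1$''), the pieces that detaching $z$ creates, once the $l_0(z)$ out-edge demand at $z$ has been discharged. A parity correction of the kind in Lemma~\ref{lem:parityforest} lets me arrange that the liftings terminate with $z$ isolated, so that $G_0$ lives on $V(G)\setminus\{z\}$.

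The step I expect to be the main obstacle is the admissibility claim combined with the $l_0$-interaction. In the pure tree-connectivity case the uncrossing of tight partitions is classical, but here each lift must simultaneously preserve an orientation of the non-tree edges with out-degree at least $l_0$, so the two submodular constraints --- tree packing and out-degree --- have to be uncrossed compatibly, and it is their joint tight families, rather than either alone, that make the argument delicate. The fallback I would keep in reserve is to encode the $l_0$-demand as extra tree-connectivity in an auxiliary graph (adjoining a vertex joined to each $v$ by a controlled number of edges), reducing the whole statement to a single splitting-off theorem for tree-connectivity.
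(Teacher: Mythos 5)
First, a point of reference: the paper does not prove Lemma~\ref{lem:preserving} at all --- it is imported verbatim from \cite{ModuloBounded} --- so there is no in-paper argument to compare yours against, and I can only judge the proposal on its own terms. On those terms it is a sensible plan but not a proof, and the gap sits exactly where you yourself locate it. The entire content of the lemma is the admissibility claim: that at each stage some pair of edges at $z$ can be lifted without destroying $(m,l_0)$-partition-connectivity. You reduce this to uncrossing the family of tight partitions, but you neither state the submodular/supermodular inequality you intend to uncross with (for partitions of the vertex set rather than single sets, and for a deficiency function that mixes the tree-packing term $m(t-1)$ with the $l_0$-orientation demand), nor describe the structure of a ``blocked'' pair, nor derive a contradiction from the assumption that every pair at $z$ is blocked. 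For Mader-type splitting-off these are precisely the steps where genuine obstructions live, and uncrossing tight \emph{partitions} while simultaneously respecting an orientation constraint is substantially more delicate than the classical edge-connectivity case; saying that uncrossing ``restricts their structure at $z$ and shows that not every candidate pair can be blocked'' restates the goal rather than proving it. The preliminary min--max characterization has the same status: the decomposition into an $m$-tree-connected factor plus an $l_0$-orientable factor is a matroid-union-type statement, and its partition form does not follow by simply ``combining'' Nash-Williams/Tutte with Hakimi and Frank--Gy\'arf\'as; it must be proved or correctly quoted before ``tight'' even has a meaning.

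Two further concrete problems. The closing ``parity correction'' rests on a misreading: the lemma lifts exactly $d_H(z)-l_0(z)-1$ pairs and then removes $z$ together with whatever edges remain incident with it; $d_G(z)$ need not equal $2(d_H(z)-l_0(z)-1)$, $z$ need not end up isolated, and Lemma~\ref{lem:parityforest} (spanning forests with a prescribed odd-degree set) plays no role here. Second, the exact count $d_H(z)-l_0(z)-1$ has to fall out of the argument rather than be rationalized afterwards; your heuristic (``reconnect the pieces created by detaching $z$, minus the discharged demand $l_0(z)$'') is plausible for $m=1$ but you never verify that the iterated splitting terminates after precisely that many admissible lifts for general $m$, nor that the degree hypothesis $d_G(z)\ge 2d_H(z)-2l_0(z)-2$ is what keeps pairs available throughout the iteration (note the pairs may be drawn from $E(G)$, not only $E(H)$). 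Your fallback --- encoding the $l_0$-demand as extra tree-connectivity toward an auxiliary vertex and invoking a single splitting-off theorem for tree-connectivity --- is arguably the more promising route, but it too is only named, not carried out.
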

Now, are we are ready to prove the main result of this section.
\begin{thm}
{Let $G$ be a general graph with $z\in V(G)$ and let $L:V(G)\rightarrow 2^{\mathbb{Z}}$ be a mapping satisfying 
$gap(L)\le k$ and $gap(L(z))=k$.
Let $s$, $s_0$, and $l_0$  be  three integer-valued functions on $V(G)$ satisfying $s(v)+s_0(v)+gap(L(v))< d_G(v)$ and
$\max\{s(v),s_0(v)\}\le l_0(v)+(2k^2-gap(L(v))+1)\bar{\chi}_z(v)$ for each vertex $v$. 
If $G$ is $(2k^2, l_0)$-partition-connected, then it admits a
 $z$-defective $L$-orientation such that  for each vertex  $v$,
$$s(v) \le   d^+_G(v)   \le d_G(v)-s_0(v).$$
}\end{thm}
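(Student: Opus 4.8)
The plan is to reduce the list constraint to the two-valued constraint handled by Theorem~\ref{thm:z-defective-p,q}, using the lifting Lemma~\ref{lem:preserving} both to eliminate the single uncontrolled vertex $z$ and to pass from the general partition-connected graph to a tree-connected one.

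Fix a minimal $(2k^2,l_0)$-partition-connected factor $H$ of $G$, let $F$ carry the remaining edges, and orient $F$ (via Corollary~\ref{cor:changing-orientations}) so that its out-degrees $d^+_F(v)$ meet the reserve demands below. For each $v\ne z$ I would let $\p(v)\le\q(v)$ be the two consecutive members of $L(v)$ bracketing $d_H(v)/2+d^+_F(v)$; then $\q(v)-\p(v)\le gap(L(v))\le k$ and, after subtracting $d^+_F(v)$, the shifted pair straddles $d_H(v)/2$, exactly the midpoint hypothesis of Theorem~\ref{thm:z-defective-p,q} (applicable since $2k^2\ge(\tfrac32k+1)(k-1)$). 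The strict inequality $s(v)+s_0(v)+gap(L(v))<d_G(v)$ keeps this bracketing value inside $[s(v),d_G(v)-s_0(v)]$, while the reserve bound $\max\{s(v),s_0(v)\}\le l_0(v)+(2k^2-gap(L(v))+1)$ is what lets the $l_0$-reserve of $H$ drive the realized out-degree onto the correct side of that window.

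To remove $z$, I would use that minimality of $H$ keeps $d_H(z)$ small, so that the hypothesis $d_G(z)\ge 2d_H(z)-2l_0(z)-2$ of Lemma~\ref{lem:preserving} holds; lifting the prescribed pairs of $H$-edges at $z$ yields a graph $G_0$ on $V(G)\setminus\{z\}$ that is again $(2k^2,l_0)$-partition-connected. On $G_0$, which is now undefective, I would run the first step—applying Theorem~\ref{thm:z-defective-p,q} to its tree-connected factor with an auxiliary slack vertex and using the additive Lemma~\ref{lem:additive} to produce a target $\t$ with $\t(v)\in\{\p(v),\q(v)\}$ and $\sum_{v\ne z}\t(v)=|E(G_0)|$, so that the slack vertex too falls in its list. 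Splitting each lifted edge back through $z$ preserves all out-degrees for $v\ne z$ and restores $d_H(z)/2$ to $z$; since $z$ also receives the $d^+_F(z)$ out-edges of $F$ (with $d^+_F(z)$ adjustable up to $d_G(z)-d_H(z)$), the bound $\max\{s(z),s_0(z)\}\le l_0(z)$ is exactly enough to keep $d^+_G(z)\in[s(z),d_G(z)-s_0(z)]$.

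The hard part will be the joint bookkeeping behind the first two steps: each bracket $\{\p(v),\q(v)\}$ must simultaneously make the midpoint hypothesis of Theorem~\ref{thm:z-defective-p,q} hold on $H$, admit an exact target summing to $|E(G_0)|$—which is why a single slack vertex is unavoidable and where Lemma~\ref{lem:additive} is used—and keep the realized $d^+_G(v)$ within $[s(v),d_G(v)-s_0(v)]$ at every vertex. Reconciling these three demands is what pins down the constants, and the delicate point is to verify that the term $2k^2-gap(L(v))+1$ is exactly enough to absorb a worst-case gap of size $k$ at each vertex while the $2k^2$-tree-connectivity of $H$ furnishes the room that Theorem~\ref{thm:z-defective-p,q} consumes—so that $2k^2$, and no larger constant, suffices.
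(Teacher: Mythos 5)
Your plan takes a genuinely different route from the paper, and it has a fatal gap at its centre. The paper keeps $z$ inside the graph as the defective vertex throughout: it argues by induction on $|V(G)|$, using Lemma~\ref{lem:preserving} to lift away low-degree vertices $u\neq z$ whose list meets the window around $l_0(u)+2k^2$ (its Case~1), deleting an edge at $z$ when $d_G(z)$ is too small (its Case~2), and only then applying Theorem~\ref{thm:z-defective-p,q} directly to $G$ with $\p(v),\q(v)$ taken to be the closest members of $L(v)$ straddling $d_G(v)/2$. You instead lift $z$ out of the graph and then must orient the resulting $G_0$ on $V(G)\setminus\{z\}$ so that \emph{every} vertex lands in its list. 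That is an exact $\{\p,\q\}$-orientation problem, which (as Theorem~\ref{thm:exact-p,q} shows) is solvable only if there is a selection $\t(v)\in\{\p(v),\q(v)\}$ with $\sum_{v\neq z}\t(v)=|E(G_0)|$. Nothing in the hypotheses guarantees such a selection exists --- for instance all differences $\q(v)-\p(v)$ could be even while $|E(G_0)|-\sum_{v\neq z}\p(v)$ is odd --- and Lemma~\ref{lem:additive} cannot produce it: that lemma only bounds a sum under a no-coincident-partial-sums hypothesis, and in Theorem~\ref{thm:exact-p,q} it is invoked \emph{after} the sum condition is assumed. The defective vertex $z$ exists precisely to absorb this discrepancy, so removing it by lifting before orienting destroys the one degree of freedom the argument needs; your phrase ``so that the slack vertex too falls in its list'' is exactly the step that cannot be justified.

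Two further steps are unsupported. First, you never treat vertices $u\neq z$ with $d_G(u)<2l_0(u)+4k^2$: for such $u$ the straddling pair $\p(u),\q(u)$ need not lie in $[s(u),d_G(u)-s_0(u)]$, and the paper needs the entire Case~1 induction (whose failure forces $L(u)$ to avoid the middle window $\{l_0(u)+2k^2-i\}$) in order to verify these bounds at the end. Second, you propose to orient $F=G\setminus E(H)$ with prescribed out-degrees via Corollary~\ref{cor:changing-orientations}, but that corollary requires $F$ to be suitably edge-connected, and the complement of a partition-connected factor carries no connectivity guarantee whatsoever. The reduction to Theorem~\ref{thm:z-defective-p,q} is the right instinct, but it must be applied to $G$ itself with $z$ still present and defective, after the degree obstructions at the other vertices have been removed by induction.
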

\begin{proof}
{We may assume that $l_0$ is nonnegative and $G$ is loopless. The proof is by induction on $|V(G)|$.
For $|V(G)|\le 2$ the proof is straightforward.
So, suppose $|V(G)|\ge 3$.
For notational simplicity, let us define $m=2k^2$.
For proving the theorem,  we shall consider the following four cases.
%
\vspace{3mm}
\\
{\bf Case 1. There is a vertex $u\in V(G)\setminus \{z\}$ with $d_G(u)=2l_0(u)+2m-r$ such that  
$0< r\le l_0(u)+m$ and 
$l_0(u)+m-i \in L(u)$, where $0 \le i \le \min \{r,gap(L(u))-1\}$.}

By Lemma~\ref{lem:preserving},
there are  $l_0(u)+m-r$  pair of edges incident with $u$ 
such that by lifting them the resulting general graph $H$ with $V(H)=V(G)\setminus u$
is still $(m, l_0)$-partition-connected.
Obviously, $d_R(u)=d_G(u)-2(l_0(u)+m-r)=r$,
where $R$ is the factor of $G$ consisting of all  edges incident with $u$ that are not lifted.
Since $i\le r$, the  edges of $R$ can be orientated
such that $ d^+_R(u)+l_0+m-r \in L(u)$.
Define $s'(u)=s(u)-(l_0(u)+m-r)$  and $s'_0(u)=s_0(u)-(l_0(u)+m-r)$.
By the assumption, we must have $\max\{s'(u),s'_0(u)\}\le d_R(u)-(gap(L(u))-1)$,
 and  $s'(u)+s'_0(u)\le d_R(u)-(gap(L(u))-1)$.
Therefore, if  $d_R(u)\ge  gap(L(u))-1$ then the orientation of $R$ can be selected such that
$s'(u)\le d^+_R(u)\le d_R(u)-s'_0(u)$.
If $d_R(u)\le gap(L(u))-1$, then we must automatically have
$$s'(u)\le 0\le d^+_R(u)\le d_R(u) \le d_R(u)-s'_0(u).$$
Define $L'(v)=\{j-d^+_R(v):j \in L(v)\}$, where $v\in V(H)$.
Obviously, $\max\{s(v)-d_{R}^+(v), s_0(v)-d_{R}^-(v) \}\le l_0(v)+m-(gap(L(v))-1)$ and 
$s(v)-d_{R}^+(v)+s_0(v)-d_{R}^-(v) +gap(L(v))-1 \le d_{H}(v)$.  
Thus by the induction hypothesis, 
$H$ has a $z$-defective $L'$-orientation  such that for each $v\in V(H)$, 
$$s(v)-d_{R}^+(v)\le d_{H}^+(v)\le d_{H}(v)-(s_0(v)-d_{R}^-(v))=d_{G}(v)-s_0(v)-d^+_{R}(v).$$
This orientation induces a $z$-defective $L$-orientation for $G$ such that for each  $v\in V(H)$,  
 $d^+_G(v)= d^+_{H}(v)+d^+_R(v)$, and also $d_{G}^+(u)= d^+_R(u)+l_0(u)+m-r.$
This can complete the  proof of Case 1. $\square$
%
%
%
\vspace{3mm}
\\
{\bf Case 2.   $d_G(z)<2l_0(z)+gap(L(z))-1$.}

Since $d_G(z)\ge  gap(L(z))-1$, we must have  $l_0(z)>0$ and hence there is an edge $zu$ incident with $z$
 such that the graph $G_0$ is $(m, l_0-\chi_z)$-partition-connected, where $G_0=G-zu$. 

First assume that  $s(z)<l_0(z)$.
Since $s(z)<l_0(z)$, we must have $s(z)\le l_0(z)-\chi_z(z)$.
Thus by the induction hypothesis, the graph $G_0$ has a $z$-defective $(L-\chi_u)$-orientation
 such that for each vertex $v$, $s(v)-\chi_u(v)\le d_{G_0}^+(v)\le  d_{G_0}(v)-(s_0(v)-\chi_z(v))$.
Now, this orientation induces the desired  $z$-defective $L$-orientation for $G$ by adding an edge directed from $u$ to $z$.

Now,  assume that  $s(z)=l_0(z)$.
This implies that $s_0(z)<l_0(z)$, because $s(z)+s_0(z)+gap(L(z))-1\le d_G(z)$ and $d_G(z)<2l_0(z)+gap(L(z))-1$.
Thus by the induction hypothesis, the graph $G_0$ has a  $z$-defective $(L-\chi_z)$-orientation such that for each vertex $v$,
$s(v)-\chi_z(v)\le d_{G_0}^+(v)\le  d_{G_0}(v)-(s_0(v)-\chi_u(v))$.
Now, this orientation induces the desired   $z$-defective $L$-orientation for $G$ by adding an edge directed from $z$ to $u$.
This completes the  proof of Case 2. $\square$

Now, by applying  Theorem~\ref{thm:z-defective-p,q}, 
the graph  $G$ has an orientation such that $|d^+_G(z)-d_G(z)/2|\le k/2$  and
for all $v\in V(G)\setminus \{z\}$, 
$d^+_G(z)\in \{\p(v),\q(v)\}$,
 where $\p(v)$ and $\q(v)$ are the integers in $L(v)$ with the smallest $|\q(v)-\p(v)|$ 
such that   $  \p(v)\le d_G(v)/2\le \q(v)$.
According to Case 2, $d_G(z)\ge 2l_0(z)+m-(k-1)$,
which  implies that $s_0(z)\le l_0(z)\le  d^+_G(z) \le d_G(z)-l_0(z)\le d_G(z)-s_0(z)$.
Let $v\in V(G)\setminus \{z\}$.
If $d_G(v)\ge 2l_0(v)+2m$, then we must  have 
$$s(v)\le \lfloor d_G(v)/2\rfloor-(gap(L(v))-1)\le  d^+_G(v)  \le 
\lceil d_G(v)/2\rceil+(gap(L(v))-1)\le  d_G(v)-s_0(v).$$
Otherwise,  $d_G(v)=2l_0(v)+2m-r$  in which  $0< r< gap(L(v))-1$.
According to Case 1, 
$\{l_0(v)+m-i: 0\le i \le r\} \cap L(v)=\emptyset ,$
which implies that
$$l_0(v)+m -gap(L(v))<\p(v)\le d^+_G(v) \le \q(v)<l_0(v)+m-r+gap(L(v))= d_G(v)-(l_0(v)+m -gap(L(v))),$$
and so $s(v)\le d^+_G(v)  \le  d_G(v)-s_0(v).$
Hence the proof is completed.
}\end{proof}
%
%
%
%
%
%
%
%
%
%
%
%
%
%
%
%
%
%
%

\end{document}